\documentclass[12pt]{article}

\usepackage[english]{babel}
\usepackage[dvipsnames]{xcolor}
\usepackage[colorlinks]{hyperref}

\usepackage{geometry}
\usepackage{enumitem}
\usepackage{seqsplit}
\usepackage{xstring}
\setenumerate{label=(\roman*)}
\makeatletter
\let\@subjclass\@empty
\let\@subjclassyear\@empty
\let\@keywords\@empty
\newcommand{\subjclass}[2][2020]{%
  \gdef\@subjclassyear{#1}%
  \gdef\@subjclass{#2}%
}
\newcommand{\keywords}[1]{\gdef\@keywords{#1}}
\let\article@maketitle\maketitle
\renewcommand{\maketitle}{%
  \article@maketitle
  \begingroup
  \renewcommand{\thefootnote}{}%
  \ifx\@subjclass\@empty\else
    \begin{NoHyper}
      \footnotetext{\textit{\@subjclassyear\ Mathematics Subject Classification.}
        \@subjclass.}%
    \end{NoHyper}
  \fi
  \ifx\@keywords\@empty\else
    \begin{NoHyper}
      \footnotetext{\textit{Key words and phrases.} \@keywords.}%
    \end{NoHyper}
  \fi
  \endgroup
}
\makeatother

\newcommand\myshade{100}
\hypersetup{
  linkcolor  = RoyalBlue!\myshade!black,
  citecolor  = ForestGreen!\myshade!black,
  urlcolor   = RedOrange!\myshade!black,
  colorlinks = true,
}

\DeclareRobustCommand{\faGithub}{{\usefont{U}{fontawesomebrands0}{regular}{n}\char167}}
\DeclareRobustCommand{\faFolderOpen}{{\usefont{U}{fontawesomefree1}{solid}{n}\char105}}
\DeclareRobustCommand{\faFileCode}{{\usefont{U}{fontawesomefree1}{solid}{n}\char68}}
\colorlet{githubrefcolor}{RedOrange!\myshade!black}
\makeatletter
\newcommand{\github@breakable@text}[1]{%
  \begingroup
  \edef\github@breakable@result{#1}%
  \texttt{\expandafter\seqsplit\expandafter{\github@breakable@result}}%
  \endgroup
}
\newcommand{\github@breakable@literal}[1]{%
  \begingroup
  \edef\github@text{\detokenize{#1}}%
  \github@breakable@text{\github@text}%
  \endgroup
}
\newcommand{\github@ref}[3]{%
  \href{#3}{\textcolor{githubrefcolor}{#1\,#2}}%
}
\newcommand{\github@ref@literal}[3]{%
  \href{#3}{\textcolor{githubrefcolor}{#1\,\github@breakable@literal{#2}}}%
}
\newcommand{\github@faicon}[1]{{\scriptsize #1}}
\DeclareRobustCommand{\leanicon}{%
  \begingroup
  \scriptsize
  \leavevmode
  \ooalign{%
    \hfil$\bm\forall$\hfil\cr
    \hfil\kern0.018em$\bm\forall$\hfil\cr
    \hfil\kern0.036em$\bm\forall$\hfil\cr
    \hfil\kern-0.018em$\bm\forall$\hfil\cr
    \hfil\kern-0.036em$\bm\forall$\hfil\cr
  }%
  \endgroup
}
\newcommand{\github@strip@suffix}[1]{%
  \IfSubStr{#1}{?}{\StrBefore{#1}{?}[#1]}{}%
  \IfSubStr{#1}{\#}{\StrBefore{#1}{\#}[#1]}{}%
}
\newcommand{\github@repo@text}[1]{%
  \begingroup
  \StrBehind{#1}{github.com/}[\github@path]%
  \github@strip@suffix{\github@path}%
  \IfSubStr{\github@path}{/}{%
    \StrBehind{\github@path}{/}[\github@afterowner]%
    \IfSubStr{\github@afterowner}{/}%
      {\StrBefore{\github@afterowner}{/}[\github@text]}%
      {\let\github@text\github@afterowner}%
  }{%
    \let\github@text\github@path
  }%
  \github@breakable@text{\github@text}%
  \endgroup
}
\newcommand{\github@path@text}[2]{%
  \begingroup
  \IfSubStr{#1}{/#2/}{%
    \StrBehind{#1}{/#2/}[\github@path]%
    \github@strip@suffix{\github@path}%
    \IfSubStr{\github@path}{/}%
      {\StrBehind{\github@path}{/}[\github@text]\github@breakable@text{\github@text}}%
      {\github@repo@text{#1}}%
  }{%
    \def\github@text{#1}%
    \github@breakable@text{\github@text}%
  }%
  \endgroup
}
\newcommand{\github@line@text}[1]{%
  \begingroup
  \StrBehind{#1}{/blob/}[\github@path]%
  \github@strip@suffix{\github@path}%
  \IfSubStr{\github@path}{/}%
    {\StrBehind{\github@path}{/}[\github@file]}%
    {\StrBehind{#1}{github.com/}[\github@file]\github@strip@suffix{\github@file}}%
  \StrBehind{#1}{\#}[\github@line]%
  \IfSubStr{\github@line}{?}{\StrBefore{\github@line}{?}[\github@line]}{}%
  \github@breakable@text{\github@file}\space\github@breakable@text{\github@line}%
  \endgroup
}
\newcommand{\github@repo@default}[1]{%
  \github@ref{\github@faicon{\faGithub}}{\github@repo@text{#1}}{#1}%
}
\newcommand{\github@folder@default}[1]{%
  \github@ref{\github@faicon{\faFolderOpen}}{\github@path@text{#1}{tree}}{#1}%
}
\newcommand{\github@file@default}[1]{%
  \github@ref{\github@faicon{\faFileCode}}{\github@path@text{#1}{blob}}{#1}%
}
\newcommand{\github@line@default}[1]{%
  \github@ref{\leanicon}{\github@line@text{#1}}{#1}%
}
\newcommand{\github@tree@default}[1]{%
  \begingroup
  \StrBehind{#1}{/tree/}[\github@treepath]%
  \github@strip@suffix{\github@treepath}%
  \IfSubStr{\github@treepath}{/}%
    {\endgroup\github@folder@default{#1}}%
    {\endgroup\github@repo@default{#1}}%
}
\newcommand{\github@auto@default}[1]{%
  \IfSubStr{#1}{/blob/}{%
    \IfSubStr{#1}{\#L}%
      {\github@line@default{#1}}%
      {\github@file@default{#1}}%
  }{%
    \IfSubStr{#1}{/tree/}%
      {\github@tree@default{#1}}%
      {\github@repo@default{#1}}%
  }%
}
\newcommand{\github@auto@literal}[2]{%
  \IfSubStr{#2}{/blob/}{%
    \IfSubStr{#2}{\#L}%
      {\github@ref@literal{\leanicon}{#1}{#2}}%
      {\github@ref@literal{\github@faicon{\faFileCode}}{#1}{#2}}%
  }{%
    \IfSubStr{#2}{/tree/}%
      {\github@ref@literal{\github@faicon{\faFolderOpen}}{#1}{#2}}%
      {\github@ref@literal{\github@faicon{\faGithub}}{#1}{#2}}%
  }%
}
\NewDocumentCommand{\gh}{om}{%
  \IfNoValueTF{#1}%
    {\github@auto@default{#2}}%
    {\github@auto@literal{#1}{#2}}%
}
\makeatother

\usepackage{amsmath}
\usepackage{amssymb}
\usepackage{amsthm}
\usepackage{bm}
\usepackage{bbold}

\usepackage[no-math]{fontspec}
\usepackage[capitalise]{cleveref}
\crefformat{equation}{(#2#1#3)}
\crefname{subsection}{Section}{Sections}

\theoremstyle{plain}
\newtheorem{thm}{Theorem}[section]
\newtheorem{lem}[thm]{Lemma}
\newtheorem{prop}[thm]{Proposition}
\newtheorem{cor}[thm]{Corollary}

\theoremstyle{definition}
\newtheorem{defn}[thm]{Definition}

\theoremstyle{remark}
\newtheorem*{rem}{Remark}

\ExplSyntaxOn
\NewDocumentCommand{\DeclareLeanTheoremEnvironment}{m}
  {
    \cs_new_eq:cc { lean_old_#1: } { #1 }
    \cs_new_eq:cc { lean_old_end_#1: } { end#1 }
    \RenewDocumentEnvironment{#1}{ o d<> }
      {
        \IfNoValueTF{##1}
          { \use:c { lean_old_#1: } }
          { \use:c { lean_old_#1: } [##1] }
        \IfNoValueF{##2}
          { \nobreak\textup{##2}\space }
      }
      { \use:c { lean_old_end_#1: } }
  }
\ExplSyntaxOff

\DeclareLeanTheoremEnvironment{thm}
\DeclareLeanTheoremEnvironment{lem}
\DeclareLeanTheoremEnvironment{prop}
\DeclareLeanTheoremEnvironment{cor}
\DeclareLeanTheoremEnvironment{defn}

\makeatletter
\def\@fnsymbol#1{\ensuremath{\ifcase#1\or \dagger\or \ddagger\or
           \dagger\dagger
           \or \ddagger\ddagger \else\@ctrerr\fi}}
\makeatother
\renewcommand{\thefootnote}{\fnsymbol{footnote}}

\newcommand\N{\ensuremath{\mathbb{N}}}
\newcommand\R{\ensuremath{\mathbb{R}}}
\newcommand{\Lr}[1]{\ensuremath{\mathcal{L}_r(#1)}}

\usepackage[backend=biber,maxnames=999,giveninits=true,style=trad-plain,sortcites=true,datamodel=mrnumber,isbn=false,url=false,doi=false]{biblatex}
\usepackage{csquotes}

\DeclareFieldFormat{title}{\myhref{\mkbibemph{#1}}}
\DeclareFieldFormat
  [article,inproceedings,unpublished]
  {title}{\myhref{\mkbibquote{#1\isdot}}}

\newcommand{\myhref}[1]{%
  \ifhyperref
    {\iffieldundef{doi}
       {\iffieldundef{url}
          {#1}
          {\href{\strfield{url}}{#1}}}
       {\href{http://dx.doi.org/\strfield{doi}}{#1}}}
    {#1}%
}

\DeclareFieldFormat{mrnumber}{%
  MR\addcolon\space
  \ifhyperref
    {\href{http://www.ams.org/mathscinet-getitem?mr=#1}{\nolinkurl{#1}}}
    {\nolinkurl{#1}}}

\renewbibmacro*{doi+eprint+url}{%
  \printfield{mrnumber}%
  \ifentrytype{online}
    {\newunit\newblock\usebibmacro{url+urldate}}
    {}%
}

\newcommand{\mathlib}{\gh{https://github.com/leanprover-community/mathlib4/tree/v4.30.0}}
\newcommand{\banlat}{\gh{https://github.com/davidmunozlahoz/banlat/tree/v0.1.0}}

\newcommand{\falg}{$f\!$-algebra}
\newcommand{\one}{\mathbb{1}}
\newcommand{\fn}[1]{\|#1\|_{\FBL[E]}}
\newcommand{\FBL}[1][E]{\operatorname{FBL}[#1]}
\DeclareMathOperator{\conv}{conv}
\DeclareMathOperator{\SolInt}{Sol\,Int}
\DeclareMathOperator{\id}{id}

\title{The Banach lattice Lean library}
\author{David Muñoz-Lahoz\footnote{Instituto de Ciencias
Matemáticas. Universidad Autónoma de Madrid.
\\\emph{Email:} \texttt{david.munnozl (at) uam (dot) es}}}
\subjclass[2020]{Primary 46B42; Secondary 68V20}
\keywords{Banach lattice, Lean 4 library, locally solid topology,
lattice-linear expression, positive
projection, Banach lattice algebra}
\date{\today}

\begin{document}

\maketitle

\begin{abstract}
    We present a Lean 4 library for the theory of Banach
    lattices. Its purpose is to support the systematic formalization of
    contemporary research in Banach lattices and related areas. As
    evidence of this, we describe three research-level formalizations
    built using the library. Writing the library at scale was made
    possible by the use of LLMs with careful human supervision and
    planning. Unlike autoformalization, this approach allows for
    an actual understanding of the code. This, in turn, led to new
    mathematical insights that are also discussed. Judging by the
    interest expressed by other researchers, we expect the
    library to become a communal effort in the near future. For this
    reason, we also describe several parts of the theory that could be added next.
\end{abstract}

\section{Introduction}

The Banach lattices Lean library \banlat\ is a
\href{https://lean-lang.org/}{Lean 4} project
that contains a body of elementary results in the theory of Banach
lattices. The goal of this library is to allow for
the systematic formalization of research-level results in Banach
lattices and nearby fields (such as stable phase
retrieval, where autoformalization is already being used
\cite{bertolini_etal2026}). In fact, in \cref{sec:use_cases} we will
present three examples of research-level results in Banach lattices
that have been formalized using the library. This was done by the
author in a relatively short span of time; thus we expect that, as the
library scales both in contributors and over time, we will be able to
systematically formalize a significant part of the research in these
fields.

\paragraph{``Semiautoformalization''}
This formalization at scale has been made possible by the significant
improvement of Large Language Models (LLMs).
Even though LLMs were used intensively in the process of writing this code,
we would not consider this an autoformalization project. Instead,
it could be regarded as an exercise in
\emph{semiautoformalization}: most of the code was written by
LLMs, but with close human steering and supervision. In the
levels of quality of formalization introduced by T.\ Tao in
\cite{tao2026}, this project would lie somewhere between levels 2
(Publication-level formalization) and 3 (Prototype-level
formalization).

This has several implications. On the one hand, the use of generative
tools allows for sufficient speed and scale to verify
research-level results in a reasonable amount of time and with reasonable effort.
The scientific implications of this are well-known: the amount
of literature being published is costly and difficult to check;
if formalization of papers is systematized, in order to check the
correctness of a paper we would only need to check the correctness of
the formalization. This may still not be entirely trivial, but it is
certainly easier. Notice that, in the process of checking these
formalized papers, having a trusted base of knowledge (a library) is
critical (for instance, in order to avoid checking the tree of
definitions back to mathlib every single time).

On the other hand, the careful human supervision of these generative
tools has several advantages. First, it allows for a careful
organization of the library. Second, we can correct certain approaches
proposed by the LLM that feel unnatural to humans, thus making the library more
transparent for future users and contributors. And third, since we
have an understanding of what is going on in our code, we can even
gain some mathematical insights in the process (see
\cref{sec:insights}).
In practice, doing this requires a certain expertise both in Banach
lattices and in Lean, careful planning of how to approach the
library, and making sure that we always ask the LLM to write definitions
and results that are close and within reach, so as not to let it make
design decisions. This keeps LLM usage efficient, another feature of
this approach that could become relevant if LLM token prices increase
in the future.

While working on this project, \cite{bei_etal2026} announced a library
in computational economics with a similar philosophy. We also
highlight \cite{armstrong2026}, which is used to formalize
\cite{armstrong_kuusi2025}. Very recently, members of the Lean FRO and
the Mathlib initiative announced Tau Ceti
\cite{taucetiproject2026}, a ``repository of formal mathematics,
directed by human-written roadmaps, implemented and maintained by AI
contributors, subject to adversarial review.''
We believe the philosophy behind this new project is closely aligned
with the one presented below.

\paragraph{Formalization vs ``Semiautoformalization'' vs
Autoformalization}

Of course, the previous benefits are amplified in manual
formalization: it is more readable, it is properly implemented, and
it provides more mathematical insights. This is, of course, what happens
with \mathlib, the mathematics Lean library
\cite{mathlib_community2020}. Then, what is the point of this
``semiautoformalization''? The answer is scale. It is a trade-off: we
are sacrificing code quality (especially inside the proofs, since
these are generally not checked) for scale. For
instance, as mentioned above, sometimes we had to redirect the LLM's
proposed approach, but we certainly also missed several occasions
to do so because of speed and scale. (At other times it was the LLM
that taught us how to approach certain results, see
\cref{sec:insights}.)

We do not feel that ``semiautoformalization'' is better or worse in
any way: it is just a different approach, with different goals in
mind. We believe that with this scale, and proper training of our
fellows, formal verification of new mathematical results can become
truly systematic in our field. How we are facing this community-level
change is addressed in more detail in \cite{de_dios_pont_etal2026}.

As a proof of concept, we present in \cref{sec:use_cases} the formalization of
three research-level results that the library has made possible: the
first is the construction of an object that has sparked considerable
interest in our community in recent years, the second is the solution
to an open problem recently published by the author, and the third is
a relatively simple counterexample answering an open question that is
part of an ongoing project.

We must clarify that, once all the necessary preliminaries have been
added to the library, we expect the formalization of papers to be an
actual autoformalization. For this process, there already exist tools
like LEAP \cite{kung_etal2026} and LeanMarathon \cite{zhang_etal2026}.
However, this kind of autoformalization, which assumes the necessary
preliminaries are already in the library, is significantly cheaper
than most autoformalization projects, and much easier to check because
of the shared body of results that the library provides.

\paragraph{Why Banach lattices?}

Banach lattices can be approached both from analysis, as Banach spaces
with some additional structure, and from order theory, as vector
lattices with a complete lattice norm. This second approach has proved
to be very apt for formalization, because it starts very close to
the contents already available in \mathlib. Thus we could get to the
elementary theory of vector and Banach lattices without much
preliminary material, progressing then to more analytic concepts
involving convergence, continuity, functionals, measures, compacta,
and more categorical constructions such as completions and free
objects, which play a central role in the modern theory.

Beyond the mathematical details, the field of Banach lattices has a cohesive and
supportive community that has been willing to join forces to
contribute to this project, and to learn the
new tools needed to do so. Several researchers in Banach
lattices have already expressed their interest in contributing. This
will make expanding the library in a variety of directions relatively
easy, leveraging the expertise of the different researchers.
A detailed account of how this process is unfolding can be found in
\cite{de_dios_pont_etal2026}.

\paragraph{Future steps}

Short-term goals of the library include its extension (see
\cref{sec:extension} for some candidate directions) and the
formalization of more research-level papers. In the near future, we
also expect to have our own systematic pipeline for formalizing papers using
the library. In the long term, we expect to distill from the library
the contents that could be welcome in \mathlib, rewrite them to have
the appropriate code quality, and contribute them. Since we are
building on top of the enormous communal effort that mathlib
represents, we feel that this could be an opportunity to give back.

Future improvements in technology (particularly, artificial
intelligence) could significantly increase the quality and scalability of
the project. We are not used to this in pure mathematics, where, unlike
in other sciences, progress has been essentially detached from
technological advances. However, projects like the one presented here
are highly dependent on technology. This means that procedures and
techniques could become obsolete as technology advances.

\paragraph{Notation and conventions}

No code appears in this paper. All pieces of code are provided as
external links to \href{https://github.com/}{GitHub repositories}.
This way, the code is always presented in its full context.
A link to a GitHub repository appears as
\gh{https://github.com/davidmunozlahoz/banlat/tree/v0.1.0}; a
folder inside the repository as
\gh{https://github.com/davidmunozlahoz/banlat/tree/v0.1.0/BanLat};
a file inside a folder as
\gh{https://github.com/davidmunozlahoz/banlat/blob/v0.1.0/BanLat/Basic.lean};
and a line inside a file as
\gh{https://github.com/davidmunozlahoz/banlat/blob/ef1bc06da46e90ed7934b35c11be07d7395bdaec/BanLat/Basic.lean\#L34}.
Often, instead of citing a particular line, we want to cite a
particular named Lean term (generally a theorem or definition). In this
case, the name instead of the line number will appear in the
reference, with the link pointing to the particular line where the
term is introduced. For instance,
\gh[abs_eq_zero_iff_zero]{https://github.com/davidmunozlahoz/banlat/blob/ef1bc06da46e90ed7934b35c11be07d7395bdaec/BanLat/Basic.lean\#L34}
points to the same line as
\gh{https://github.com/davidmunozlahoz/banlat/blob/ef1bc06da46e90ed7934b35c11be07d7395bdaec/BanLat/Basic.lean\#L34}.

Most of the time we cite the repositories \banlat\ and \mathlib.
These are libraries that change with time. Hence all references are to
the commit tagged as \verb|v0.1.0| in \banlat, and to the commit
tagged as \verb|v4.30.0| in \mathlib.

\section{Some use cases}\label{sec:use_cases}

In this section we present three research-level results that have been
formalized using the library. \Cref{sec:fbl} presents the
formalization of the free Banach lattice generated by a Banach space;
\cref{sec:wickstead} the formalization of a solution to an
open problem by A.\ W.\ Wickstead recently solved by the author; and
\cref{sec:lcs} the formalization of a counterexample to an
open question that is part of an ongoing project of the author
together with M.\ A.\ Taylor and P.\ Tradacete.

\subsection{The free Banach lattice generated by a Banach
space}\label{sec:fbl}

The free Banach lattice generated by a Banach space was first
constructed explicitly by A.\ Avilés, J.\ Rodríguez, and P.\ Tradacete
in their 2018 paper \cite{aviles_rodriguez_tradacete2018}. This object
has sparked a lot of research over the last decade, and has
been used to solve open problems as well as to better understand the
interaction between Banach spaces and Banach lattices (see for
instance
\cite{oikhberg_etal2022}).
For this reason, it seemed the best candidate to add a modern object
to the library on which a significant amount of recent research depends.

As the name indicates, the free Banach lattice generated by a Banach
space $E$ is a Banach lattice $\FBL $ together with a canonical
isometric embedding $\delta \colon E\to \FBL $ that satisfies the
following universal property: for every Banach lattice $X$ and every
bounded operator $T\colon E\to X$, there exists a unique lattice
homomorphism $\hat{T}\colon \FBL\to X$ with $\|\hat{T}\|=\|T\|$
satisfying $\hat{T}\delta =T$. As usual, if such an object exists,
then it is unique up to a unique lattice isometry. The major
accomplishment of \cite{aviles_rodriguez_tradacete2018} was to provide
an explicit description of $\FBL $. Next we describe this
construction, together with its formalization in \banlat. Throughout this
section, $E$ will denote a Banach space.

\begin{defn}
    <\gh[freeNormExt]{https://github.com/davidmunozlahoz/banlat/blob/ef1bc06da46e90ed7934b35c11be07d7395bdaec/BanLat/Free/FBL.lean\#L37}>
    Define the \emph{(extended) free norm} $\fn{{\cdot}}\colon
    C(B_{E^{*}})\to [0,\infty ]$ by
    \[
        \fn{f}=\sup \bigg\{\sum_{k=1}^{n}|f(x_k^{*})|\colon n \in \N,
            x_1^{*},\ldots ,x_n^{*}\in B_{E^{*}}, \sup_{x \in
            B_E}\sum_{k=1}^{n}|x_k^{*}(x)|\le 1\bigg\}
    \]
     for every $f \in C(B_{E^{*}})$.
\end{defn}

\begin{prop}
    <\gh[instBanachLatticeFunctionSpace]{https://github.com/davidmunozlahoz/banlat/blob/ef1bc06da46e90ed7934b35c11be07d7395bdaec/BanLat/Free/FBL.lean\#L558}>
    The space
    \[
        C_0=\{\, f \in C(B_{E^{*}}) : \fn{f}<\infty  \, \},
    \]
    with the order and linear structures inherited from
    $C(B_{E^{*}})$, and the restriction of $\fn{{\cdot }}$ as a norm,
    is a Banach lattice.
\end{prop}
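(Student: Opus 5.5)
We prove that $C_0$ is a vector sublattice of $C(B_{E^{*}})$, that $\fn{{\cdot}}$ restricted to $C_0$ is a lattice norm, and that $C_0$ is complete.

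\textbf{Sublattice and norm.} Write $A$ for the set of admissible tuples $(x_1^{*},\ldots,x_n^{*})$, i.e.\ those with $\sup_{x\in B_E}\sum_k|x_k^{*}(x)|\le 1$. For each $a\in A$ define the seminorm
\[
p_a(f)=\sum_k|f(x_k^{*})| ,
\]
so that $\fn{f}=\sup_{a\in A}p_a(f)$. A supremum of seminorms is subadditive and absolutely homogeneous, with values in $[0,\infty]$. Hence $C_0$ is a linear subspace and $\fn{{\cdot}}$ is a seminorm on it.

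Since $p_a(f)$ depends only on $|f|$, we get $\fn{|f|}=\fn{f}$. If $|f|\le|g|$ pointwise, then $p_a(f)\le p_a(g)$ for every $a$, so $\fn{f}\le\fn{g}$. In particular $|f|\in C_0$ whenever $f\in C_0$, so $C_0$ is a sublattice and the norm is a lattice seminorm.

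For definiteness, take $n=1$ and any $x^{*}\in B_{E^{*}}$. The tuple $(x^{*})$ is admissible because $\sup_{x\in B_E}|x^{*}(x)|=\|x^{*}\|\le1$. Therefore
\[
\|f\|_\infty\le\fn{f}.
\]
Thus $\fn{f}=0$ forces $f=0$, and the free norm dominates the sup norm on $C_0$.

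\textbf{Completeness.} This is the main step; we use a Fatou-type argument. Let $(f_m)$ be Cauchy in $C_0$. By the inequality $\|f\|_\infty\le\fn{f}$, the sequence is Cauchy in $C(B_{E^{*}})$, so it converges uniformly to some continuous $f$; in particular it converges pointwise.

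Given $\varepsilon>0$, choose $N$ with $\fn{f_m-f_l}\le\varepsilon$ for all $m,l\ge N$. Fix an admissible tuple $a$ and $m\ge N$. Each $p_a$ is a finite sum of point evaluations, so pointwise convergence gives
\[
p_a(f_m-f)=\lim_{l} p_a(f_m-f_l)\le\varepsilon .
\]
Taking the supremum over $a$ yields $\fn{f_m-f}\le\varepsilon$ for all $m\ge N$. It follows that $f=f_m-(f_m-f)\in C_0$ and that $f_m\to f$ in $\fn{{\cdot}}$.

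\textbf{Formalization.} In Lean, the expected difficulty is bookkeeping with the extended value $[0,\infty]$, i.e.\ ENNReal suprema. It is convenient to first show that each $p_a$ is $1$-Lipschitz for the free norm and continuous for pointwise convergence. Lower semicontinuity of the supremum then gives completeness directly. The lattice axioms themselves follow from the pointwise order already present on $C(B_{E^{*}})$.
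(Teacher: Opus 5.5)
Your proof is correct, and it is the standard argument of \cite{aviles_rodriguez_tradacete2018}. The free norm is a $[0,\infty]$-valued supremum of lattice seminorms, single functionals give $\|f\|_\infty\le\fn{f}$, and completeness follows from lower semicontinuity of the norm under pointwise convergence. The paper gives no written proof of this proposition and defers to its Lean formalization. That formalization is built on exactly this extended-valued norm (\texttt{freeNormExt}), so your outline matches the intended route.
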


For every $x \in E$, define $\delta _x \in C_0$ by $\delta
_x(x^{*})=x^{*}(x)$ for all $x^{*}\in B_{E^{*}}$
(\gh[dualEval]{https://github.com/davidmunozlahoz/banlat/blob/ef1bc06da46e90ed7934b35c11be07d7395bdaec/BanLat/Free/FBL.lean\#L561}).
Define $\FBL$ as the closed sublattice generated by $\{\, \delta _x :
x \in E \, \} $ in $(C_0,\fn{{\cdot }})$
(\gh[FBL]{https://github.com/davidmunozlahoz/banlat/blob/ef1bc06da46e90ed7934b35c11be07d7395bdaec/BanLat/Free/FBL.lean\#L592}).
Then $(\FBL, \fn{{\cdot }})$ is certainly a Banach lattice
(\gh[instBanachLatticeFBL]{https://github.com/davidmunozlahoz/banlat/blob/ef1bc06da46e90ed7934b35c11be07d7395bdaec/BanLat/Free/FBL.lean\#L663}).
Define also $\delta \colon E\to \FBL$ by $\delta (x)=\delta _x$ for
$x \in E$
(\gh[of]{https://github.com/davidmunozlahoz/banlat/blob/ef1bc06da46e90ed7934b35c11be07d7395bdaec/BanLat/Free/FBL.lean\#L671});
this map is a linear isometry
(\gh[ofLinearIsometry]{https://github.com/davidmunozlahoz/banlat/blob/ef1bc06da46e90ed7934b35c11be07d7395bdaec/BanLat/Free/FBL.lean\#L718}).
Then $\FBL$, together with $\delta $, is the free Banach lattice
generated by $E$:

\begin{thm}
    <\gh[exists_unique_lift]{https://github.com/davidmunozlahoz/banlat/blob/ef1bc06da46e90ed7934b35c11be07d7395bdaec/BanLat/Free/FBL.lean\#L1626}>
    Let $X$ be a Banach lattice, and let $T\colon E\to X$ be a bounded
    linear operator. Then there exists a unique lattice homomorphism
    $\hat{T}\colon \FBL\to X$ with $\|\hat{T}\|=\|T\|$ satisfying
    $\hat{T}\delta =T$.
\end{thm}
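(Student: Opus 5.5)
The plan follows the Avilés–Rodríguez–Tradacete construction, in three stages: uniqueness, existence on a dense sublattice, and extension by continuity.

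\textbf{Uniqueness.} Suppose two lattice homomorphisms agree on $\{\delta_x : x\in E\}$. Their equalizer is a sublattice, since lattice homomorphisms preserve $\vee,\wedge$ and linear combinations. It is closed, since lattice homomorphisms between Banach lattices are positive and hence automatically bounded. Because $\FBL$ is by definition the closed sublattice generated by the $\delta_x$, the two maps agree everywhere. In the formalization I would phrase this as a closure-induction principle for the closed sublattice generated by a set.

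\textbf{Existence on the dense part.} Let $\mathrm{FVL}$ be the (not closed) sublattice generated by the $\delta_x$. Every $f\in\mathrm{FVL}$ has the form $f=F(\delta_{x_1},\dots,\delta_{x_n})$ for a lattice-linear expression $F$, i.e.\ a finite formula built from $+$, scalar multiplication, $\vee$ and $\wedge$. Define
\[
\hat T f=F(Tx_1,\dots,Tx_n),
\]
evaluated in $X$. Two points need proof.
\begin{enumerate}
\item Well-definedness: if $F(\delta_{x_i})=G(\delta_{y_j})$ as functions on $B_{E^*}$, then $F(Tx_i)=G(Ty_j)$ in $X$.
\item The bound $\|F(Tx_1,\dots,Tx_n)\|\le \|T\|\,\fn{F(\delta_{x_1},\dots,\delta_{x_n})}$.
\end{enumerate}
Given these, $\hat T$ is linear and preserves lattice operations on $\mathrm{FVL}$ by construction, and $\hat T\delta=T$ holds trivially.

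\textbf{Main obstacle: well-definedness and the norm bound.} I expect this step to be the hardest, and I would prove both points at once. For well-definedness it suffices to show that $F(\delta_{x_i})=0$ implies $F(Tx_i)=0$, using the expression $|F-G|$. So both points reduce to the single inequality
\[
\|F(Tx_1,\dots,Tx_n)\|\le \|T\|\,\fn{F(\delta_{x_1},\dots,\delta_{x_n})}.
\]
To prove it:
\begin{enumerate}
\item Since $x\mapsto\|x\|$ is a lattice norm, dualize: $\|F(Tx_i)\|=\sup\{x^{**}(|F(Tx_i)|)\}$ over positive functionals in $B_{X^*}$.
\item Reduce to a finite-dimensional problem by passing through a finite-dimensional lattice. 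The standard route is the functional calculus in $X$: for positively homogeneous continuous $h$ on $\R^n$, the element $h(Tx_1,\dots,Tx_n)$ is well defined, for instance via Krivine/Yudin calculus or the principal ideal $I_e\cong C(K)$ with $e=\sum|Tx_i|$.
\item In $C(K)$, with $u=F(Tx_i)$, the dual representation lets us evaluate $u$ at points.
\item A positive functional on the ideal is approximated, via a partition argument, by sums $\sum_k |F(x_k^*(x_1),\dots,x_k^*(x_n))|$. Here the $x_k^*$ have the form $T^*$ applied to point-evaluation-type functionals, normalized by $\|T\|$, and they satisfy $\sup_{x\in B_E}\sum_k|x_k^*(x)|\le1$. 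This is exactly the admissible family in the definition of $\fn{\cdot}$.
\end{enumerate}
In Lean I would first try to avoid general functional calculus. Instead, I would prove the inequality by induction on $F$ with an auxiliary statement about positive functionals on $X$: every positive $\phi$ with $\|\phi\|\le1$ satisfies $\phi(|F(Tx_i)|)\le\|T\|\,\fn{F(\delta_{x_i})}$. The difficulty is that this statement is not directly inductive, so the finite-dimensional reduction seems unavoidable.

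\textbf{Extension and norm.} Once the bound holds, $\hat T$ is a bounded lattice homomorphism on the dense sublattice $\mathrm{FVL}$ with $\|\hat T\|\le\|T\|$. It extends uniquely to $\FBL$, and the extension is still a lattice homomorphism by continuity of the lattice operations. For the reverse inequality, $\|T\|\le\|\hat T\|$ follows from $T=\hat T\delta$ together with the fact that $\delta$ is an isometry (\texttt{ofLinearIsometry}).
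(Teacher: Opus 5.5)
Your outer structure is sound and follows the standard Avilés--Rodríguez--Tradacete scheme on which the formalized construction is based; the paper itself only states the theorem and links the Lean proof. The fine parts are: uniqueness by density of the generated sublattice; $\hat T$ defined through lattice-linear expressions in the $\delta_x$; well-definedness and boundedness both reduced to
\[
\|F(Tx_1,\dots,Tx_n)\|\le\|T\|\,\fn{F(\delta_{x_1},\dots,\delta_{x_n})};
\]
extension by continuity; and $\|T\|\le\|\hat T\|$ because $\delta$ is isometric.

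The gap is in this inequality, which is the whole content of the theorem: the functionals you propose are not an admissible family. Write $\iota\colon I_e\to C(K)$ for the Kakutani isomorphism. As you describe them, the $x_k^*$ are $T^*$ applied to weighted point evaluations $\mu(A_k)\,\mathrm{ev}_{t_k}$ on $I_e$, and these are in general not elements of $E^*$:
\begin{enumerate}
\item $\mathrm{ev}_{t_k}$ lives only on $I_e$, and $Tx$ need not lie in $I_e$.
\item $\mathrm{ev}_{t_k}$ need not extend boundedly to $X$. For $X=L_2[0,1]$ and $e$ the constant function $1$, $I_e=L_\infty[0,1]$. Evaluation at a point of its Stone space is $\{0,1\}$-valued on indicators, and no functional $h\mapsto\int hg$ with $g\in L_2$ behaves that way.
\item Even when $Tx\in I_e$, the sum $\sum_k\mu(A_k)|\iota(Tx)(t_k)|$ is only bounded by $x^*(e)\,\|Tx\|_e$, not by $x^*(|Tx|)$ or $\|Tx\|$. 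Admissibility must hold for \emph{every} $x\in B_E$, while your partition can only be adapted to the finitely many $Tx_i$.
\end{enumerate}

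The missing idea is to split the norming functional rather than sample points. Take $x^*\ge0$ with $\|x^*\|\le1$, and let $\mu$ represent $x^*|_{I_e}$. Choose a Borel partition $A_1,\dots,A_m$ of $K$ on which each $\iota(Tx_i)$ oscillates by less than $\varepsilon$. Put $\phi_k(z)=\int_{A_k}\iota z\,d\mu$ for $z\in I_e$. Extend by $\tilde\phi_k(y)=\sup\{\phi_k(z):z\in I_e,\ 0\le z\le y\}$ for $y\ge0$; this is additive by Riesz decomposition, so it extends linearly to $X$. Testing with $z=z_1\vee\dots\vee z_m$ gives $\sum_k\tilde\phi_k\le x^*$ on $X_+$. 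Hence $\sum_k|\tilde\phi_k(Tx)|\le x^*(|Tx|)\le\|T\|\|x\|$ for all $x\in E$, so $x_k^*=\tilde\phi_k\circ T/\|T\|$ is admissible exactly, with no approximation, and lies in $B_{E^*}$. Positive homogeneity of $F$ and its Lipschitz bound on $\R^n$ (constant $L_F$ for $\|\cdot\|_\infty$) then give
\[
\Bigl|\,\|T\|\sum_k|f(x_k^*)|-x^*\bigl(|F(Tx_1,\dots,Tx_n)|\bigr)\Bigr|\le 2L_F\,\varepsilon\,x^*(e),\qquad f=F(\delta_{x_1},\dots,\delta_{x_n}),
\]
and letting $\varepsilon\to0$ yields the inequality. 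An equivalent route is to complete $X$ under $y\mapsto x^*(|y|)$ to an AL-space and use $y\mapsto\int_{A_k}y\,d\mu$.

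This also settles your worry about induction. The inequality is not proved by induction on $F$. Induction is needed only for positive homogeneity, the Lipschitz bound, and $\iota F(y_1,\dots,y_n)=F(\iota y_1,\dots,\iota y_n)$.
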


\subsection{Non-representable Banach lattice
algebras}\label{sec:wickstead}

In \cite{wickstead2017_two}, A.\ W.\ Wickstead formulated the following question. Let $X$ be
an order complete Banach lattice, and let $P\colon X\to X$ be a positive
projection for which there exists an $\alpha \ge 0$ satisfying:
\begin{enumerate}
\item $\alpha \id_X\le P$, and
\item whenever $T$ is a positive operator such that both $T\le
    \lambda \id_X$, for some $\lambda \ge 0$, and $T\le P$, then
    also $T\le \alpha \id_X$.
\end{enumerate}
Which values can $\alpha $ take? Wickstead conjectured that either
$\alpha =0$ or $\alpha =1/n$, for some $n \in \N$, and showed this for
finite-dimensional $X$. In \cite{munoz-lahoz2026_wickstead}, it was shown that this holds for
arbitrary $X$. In \gh{https://github.com/davidmunozlahoz/wickstead}
the main results of \cite{munoz-lahoz2026_wickstead} have been formalized using the Banach
lattices Lean library. More precisely, the general result is a
consequence of the $C(K)$-case, which can be proved without assuming
order completeness.

\begin{thm}
    <\gh[main_CK]{https://github.com/davidmunozlahoz/Wickstead/blob/d0ad6db6c5bfd596ca0d6ebc918d5fe3ffd3b34b/Wickstead/FinalAssembly.lean\#L90}>
Let $K$ be a non-empty compact Hausdorff space. Let $P\colon C(K)\to C(K)$
be a positive projection with $P\one_K=\one_K$. Suppose
there exists some $\alpha \ge 0$ satisfying:
\begin{enumerate}
    \item $\alpha \id_{C(K)}\le P$, and
    \item if $0\le M_f\le P$, then $\|f\|_\infty \le \alpha $.
\end{enumerate}
Then either $\alpha =0$ or $\alpha =1/n$ for some $n \in \N$.
\end{thm}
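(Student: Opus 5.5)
We plan to translate everything into statements about measures via the Riesz representation theorem. Since $P$ is positive with $P\one_K=\one_K$, for each $t\in K$ there is a regular Borel probability measure $\mu_t$ with $Pf(t)=\int f\,d\mu_t$, and $t\mapsto\mu_t$ is weak$^*$-continuous. A positive operator $T$ with $T\le P$ is given by a kernel $\nu_t$ with $0\le\nu_t\le\mu_t$. In particular:
\begin{itemize}
\item hypothesis (i) says $\mu_t\ge\alpha\delta_t$ for every $t$;
\item $0\le M_f\le P$ holds exactly when $0\le f(t)\le a(t)$ for all $t$, where $a(t):=\mu_t(\{t\})$.
\end{itemize}
So the hypotheses become $a\ge\alpha$ on $K$, and every continuous $f$ with $0\le f\le a$ satisfies $\|f\|_\infty\le\alpha$. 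In other words, $\alpha$ is the supremum of the continuous minorants of the atom function $a$ (the constant $\alpha$ is itself such a minorant). Finally, $P^2=P$ becomes $\mu_t=\int\mu_s\,d\mu_t(s)$ for all $t$.

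Assume $\alpha>0$. The first structural step is to exploit idempotency on atoms. Evaluating the identity $\mu_t=\int\mu_s\,d\mu_t(s)$ at a singleton $\{s\}$ and using $\mu_r\ge\alpha\delta_r$ for atoms $r$ of $\mu_t$, we aim to show the following. If $s$ is an atom of $\mu_t$, then $\mu_s$ and $\mu_t$ have the same atomic part, up to the mass not carried by atoms. Moreover, the atoms of $\mu_t$ number at most $1/\alpha$, since each carries mass $\ge\alpha$. This yields a finite ``block'' structure: the atoms of $\mu_t$ form a set $S_t$ with $|S_t|\le 1/\alpha$, and $P$ acts like an averaging operator on these blocks. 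The cleanest case is $\mu_s=\mu_t$ for all $s\in S_t$.

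The second step is to pin down the value of $\alpha$. We expect to prove that $a$ is upper semicontinuous, by weak$^*$-continuity of $t\mapsto\mu_t$ together with the uniform lower bound on atom sizes. We then use a Baire category argument: $K=\bigcup_{k\le 1/\alpha}\{t:|S_t|\le k\}$, so some such set has nonempty interior $U$ on which the block size $n$ is locally constant. On $U$ the measures vary continuously while keeping exactly $n$ atoms of mass $\ge\alpha$. The plan is to show that the weights are forced to be equal, namely $1/n$, on a possibly smaller open set. This would use continuity of $t\mapsto\mu_t(\{t\})$ when atoms move continuously with $t$, combined with the block identity $\mu_s=\mu_t$, which transfers weights between points of a block.

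Once this is done, a continuous bump $f$ supported in $U$ with $f\le 1/n$, glued to the constant $\alpha$ elsewhere, is a continuous minorant of $a$. This gives $\alpha\ge 1/n$. Conversely, at a point where the block has $n$ equal atoms, $a=1/n$, and hypothesis (i) gives $\alpha\le 1/n$. Hence $\alpha=1/n$.

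The main obstacle is this second step: producing an open set on which $a$ is continuous and equal to $1/n$. Weights need not be equal across a block in general, and non-atomic parts of $\mu_t$ complicate things. We would first try to show that $\alpha$ equals the minimum of $a$ over the ``generic'' points given by the Baire argument. We would then show that at a point of minimal block size, every weight in the block equals the minimal weight. The argument for this is that idempotency plus $\mu_s\ge\alpha\delta_s$ forces the weights to be averaged, so the extreme configuration is uniform.
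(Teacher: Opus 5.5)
Your translation of the hypotheses is right: $0\le M_f\le P$ iff $0\le f\le a$, and (ii) together with a Urysohn bump gives $\inf_U a=\alpha$ on every nonempty open $U$. The first structural step, however, is left open, and evaluating $\mu_t=\int\mu_s\,d\mu_t(s)$ at singletons will not close it. What you need is much stronger than ``same atomic part'', and it is true. If $\alpha>0$, then $\mu_s=\mu_t$ for every $s\in\operatorname{supp}\mu_t$. Hence $\mu_t$ has no diffuse part, and $\mu_t=\sum_{s\in[t]}a(s)\delta_s$ with $[t]=\operatorname{supp}\mu_t$ finite, $|[t]|\le 1/\alpha$, and $\sum_{s\in[t]}a(s)=1$. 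Proving this needs an argument through $C(K)$. If $Pg=g$, then $h=P(g^2)-g^2\ge 0$, and $\int h\,d\mu_t=0$ by idempotency. So $\alpha h(t)\le 0$, which means $\int (g-g(t))^2\,d\mu_t=0$, and therefore $g\equiv g(t)$ on $\operatorname{supp}\mu_t$. Applying this to $g=Pf$ gives $\mu_s=\mu_t$.

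The second step contains an actual error. The weights are \emph{not} equalised by idempotency and $\mu_s\ge\alpha\delta_s$: take $K=\{1,2\}$ and $Pf=(\tfrac13 f(1)+\tfrac23 f(2))\one$. Minimal block size is also the wrong place to look. Let $K=\{x_k,y_k,z_k:k\in\N\}\cup\{s,t\}$ with $x_k,y_k\to s$ and $z_k\to t$. Let $P$ average with weights $\tfrac13,\tfrac13,\tfrac13$ on each block $\{x_k,y_k,z_k\}$, and with weights $\tfrac23,\tfrac13$ on $\{s,t\}$. This $P$ satisfies all the hypotheses with $\alpha=\tfrac13$, yet the smallest block has unequal weights and $1/n_{\min}=\tfrac12$.

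Equal weights can only come from (ii), so the argument must run the other way, as follows.
\begin{enumerate}
\item The block size $n$ is lower semicontinuous, by weak$^*$ continuity, atoms of mass $\ge\alpha$, and the absence of a diffuse part. So $W=\{n=N\}$ with $N=\max n$ is open and a union of blocks.
\item For $t_i\to t$ in $W$, each atom of $\mu_t$ has exactly one atom of $\mu_{t_i}$ nearby, and its mass converges. So $a$ is continuous on $W$.
\item Then $\inf_U a=\alpha$ for open $U\subseteq W$ forces $a\equiv\alpha$ on $W$. Summing over a block inside $W$ gives $N\alpha=1$; in the example, $N=3$.
\end{enumerate}
Your Baire step, as written, is vacuous, since $\{|S_t|\le\lfloor 1/\alpha\rfloor\}=K$. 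What you need instead is this semicontinuity together with the saturation of $W$.
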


\begin{cor}
    <\gh[main]{https://github.com/davidmunozlahoz/Wickstead/blob/d0ad6db6c5bfd596ca0d6ebc918d5fe3ffd3b34b/Wickstead/CorollaryThreeOne.lean\#L1149}>
    \label{cor:main}
Let $X$ be a non-zero Dedekind complete vector lattice. Let $P\colon X\to
X$ be a positive projection with constant diagonal
$\alpha $. Then either $\alpha =0$ or $\alpha =1/n$, for some $n
\in \N$.
\end{cor}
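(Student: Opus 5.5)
We derive \cref{cor:main} from the $C(K)$-theorem by localizing at a suitable positive element and representing the resulting principal ideal as a $C(K)$. First we fix what ``constant diagonal $\alpha$'' means. It means that $\alpha$ satisfies the two conditions of Wickstead's question:
\begin{enumerate}
\item $\alpha\id_X\le P$, and
\item every positive $T$ with $T\le\lambda\id_X$ for some $\lambda\ge 0$ and $T\le P$ satisfies $T\le\alpha\id_X$.
\end{enumerate}
In a Dedekind complete space, the operators $0\le T\le\lambda\id_X$ are exactly the positive elements of the ideal center $Z(X)$, so condition (ii) says that the largest central operator below $P$ is $\alpha\id_X$.

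If $\alpha=0$ there is nothing to prove, so we assume $\alpha>0$. Choose $u>0$ with $Pu=u$: since $P\ge\alpha\id_X$ and $X\ne 0$, any $x>0$ gives $Px\ge\alpha x>0$, so $u=Px$ is a nonzero positive fixed point.

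We then consider the principal ideal $I_u$ with the order-unit norm $\|\cdot\|_u$. Because $X$ is Dedekind complete, $I_u$ is uniformly complete, so by Kakutani's representation theorem it is lattice isometric to $C(K)$ for some non-empty compact Hausdorff $K$, with $u$ corresponding to $\one_K$.

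The projection $P$ maps $I_u$ into itself, since $|Px|\le P|x|\le\lambda Pu=\lambda u$ whenever $|x|\le\lambda u$. The restriction $P_u$ is therefore a positive projection on $C(K)$ with $P_u\one_K=\one_K$. Condition (i) of the theorem is inherited directly.

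For condition (ii), suppose $0\le M_f\le P_u$ on $C(K)$. We need to extend $M_f$ to a central operator $T$ on all of $X$ with $0\le T\le\|f\|_\infty\id_X$ and $T\le P$. The natural extension uses the band projection $B$ onto the band generated by $u$, which exists by Dedekind completeness. Since $I_u$ is order dense in that band, $M_f$ extends by order continuity to an orthomorphism of the band. We set $T=\widetilde{M_f}\,B$, which is zero on the disjoint complement.

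We must then check $T\le P$ on all of $X^+$. On the band this follows by order continuity from the inequality on $I_u$, where we need $P$ to be order continuous, or else we argue via suprema of elements of $I_u$ from below using only positivity and $Tx=\sup T(x\wedge nu)$. On the complement it holds because $T$ vanishes there and $P\ge 0$. The constant-diagonal hypothesis then gives $T\le\alpha\id_X$, hence $f\le\alpha$, i.e.\ $\|f\|_\infty\le\alpha$. Applying the $C(K)$-theorem to $P_u$ yields $\alpha=1/n$.

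The main obstacle is the extension step for (ii). Sketched inequality: for $x\in X^+$ in the band, $T(x\wedge nu)\le P(x\wedge nu)\le Px$ and $T(x\wedge nu)\uparrow Tx$, since orthomorphisms on Dedekind complete spaces are order continuous. This gives $Tx\le Px$ without needing order continuity of $P$. For a general $x=x_1+x_2$ split by $B$, $Tx=Tx_1\le Px_1\le Px$. I expect the remaining effort to go into formalizing Kakutani's representation and the orthomorphism extension, rather than into any conceptual difficulty.
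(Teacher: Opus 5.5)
Your proposal is correct and follows the route the paper indicates. You derive the general statement from the $C(K)$ theorem by restricting $P$ to the principal ideal $I_u$ of a nonzero positive fixed point $u=Px$, which Kakutani's theorem identifies with $C(K)$. The one delicate step is transferring hypothesis (ii), and you handle it soundly: you extend the central operator by $Tx=\sup_n S(Bx\wedge nu)$, where $S$ is the transport of $M_f$ to $I_u$ and $B$ is the band projection onto the band generated by $u$, which gives $T\le P$ using only positivity of $P$.
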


This question in operator theory arose while studying the representation problem for
Banach lattice algebras. A \emph{Banach lattice algebra}
(\gh[BanachLatticeAlgebra]{https://github.com/davidmunozlahoz/Wickstead/blob/b614cd74d9850d812e5a98b715c36b74ec0989dc/Wickstead/PropositionFourThree.lean\#L44})
is a Banach lattice together with a Banach algebra structure for which
the product of positive elements is positive. It was an open problem
\cite[Question 5.3]{wickstead2017_questions} and \cite[Problem
2]{wickstead2017_open} whether every Banach lattice algebra is faithfully
representable (as a lattice and as an algebra) in the space of regular
operators on an order complete vector lattice. Wickstead showed
in \cite{wickstead2017_two} that, if \cref{cor:main} is true, then there exist
unital Banach lattice algebras that are not unitarily representable.
In \cite{munoz-lahoz2026_wickstead}, we were able to extend this approach to show that, in
fact, \cref{cor:main} implies a negative answer to the general representation
problem for Banach lattice algebras.

\begin{prop}
    <\gh[non_representable]{https://github.com/davidmunozlahoz/Wickstead/blob/b614cd74d9850d812e5a98b715c36b74ec0989dc/Wickstead/PropositionFourThree.lean\#L1044}>
Let $A$ be a Banach lattice algebra. Suppose there exist non-zero
positive elements $e,p \in A_+$ satisfying:
\begin{enumerate}
    \item $e^2=e$, $p^2=p$,
    \item $ep=pe=p$, and
    \item $p=\alpha e+x$, for some $x \in A$ with $x\wedge e=0$, and
        some $\alpha \in \R_+$.
\end{enumerate}
If $\alpha \not\in \{0\}\cup \{1/n:n \in \N\}$, then $A$ has no
faithful representation on $\Lr X$, for any order complete vector
lattice $X$.
\end{prop}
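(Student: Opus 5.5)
The plan is to argue by contradiction and reduce to \cref{cor:main}. Suppose $\pi\colon A\to\Lr X$ is a faithful representation: an injective map that is both an algebra homomorphism and a lattice homomorphism, with $X$ order complete. Set $E=\pi(e)$, $P=\pi(p)$ and $R=\pi(x)$. Then $E,P\ge 0$, $E^2=E$, $P^2=P$, $EP=PE=P$ and $P=\alpha E+R$. Since $\pi$ preserves lattice operations, $R\wedge E=0$ in $\Lr X$. Because $p\neq 0$, $e\neq 0$ and $\pi$ is injective, we also have $E,P\ne 0$. We will build a non-zero Dedekind complete vector lattice $Y$ and a positive projection $Q$ on $Y$ with constant diagonal $\alpha$. \Cref{cor:main} then forces $\alpha\in\{0\}\cup\{1/n:n\in\N\}$, which contradicts the hypothesis.

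\textbf{Construction of $Y$ and $Q$.} Take $Y=E(X)$ with the order inherited from $X$ and lattice operations $y\vee_Y z=E(y\vee z)$. The claim that the range of a positive projection on a Dedekind complete vector lattice is a Dedekind complete vector lattice under these operations should be routine. The supremum of a bounded set $D\subseteq Y$ is $E(\sup_X D)$: positivity of $E$ together with $Ey=y$ on $Y$ shows this is the least upper bound in $Y$. Also $Y\ne 0$ because $E\ne 0$.

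Since $P=EP$, the operator $P$ maps $X$ into $Y$, so $Q=P|_Y$ is a positive projection on $Y$. The identity of $Y$ is $E|_Y$, and
\[
Q-\alpha\,\id_Y=(P-\alpha E)|_Y=R|_Y\ge 0 .
\]
This gives the first condition of constant diagonal.

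\textbf{Second condition.} Let $0\le S\le Q$ and $S\le\lambda\,\id_Y$ on $Y$, and define $T=SE$ on $X$, where $S$ is viewed as mapping into $Y\subseteq X$. Then $T\ge 0$, and since $E$ is positive with $PE=P$, we get $T\le PE=P$ and $T\le \lambda E$ in $\Lr X$.

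The key computation takes place in $\Lr X$. We have $T-\alpha E\le R$ and $T-\alpha E\le(\lambda-\alpha)E\le cE$ for $c=\max(\lambda,1)$. Hence
\[
0\le (T-\alpha E)^+\le R\wedge cE .
\]
From $R\wedge E=0$ it follows that $R\wedge cE\le c(R\wedge E)=0$ for $c\ge 1$. Therefore $T\le\alpha E$, and restricting to $Y$ (where $E$ acts as the identity) yields $S\le\alpha\,\id_Y$.

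I expect the main obstacle to be the bookkeeping between orders rather than any deep step. One must check that $Y$ with the induced operations really is a Dedekind complete vector lattice. One must also check that the operator inequalities on $Y$ transfer correctly to $X$ via $S\mapsto SE$, and back via restriction. In particular, positivity of $S$ on $Y$ in the inherited order gives positivity of $SE$ on $X$, since $E$ maps $X_+$ into $Y_+$.
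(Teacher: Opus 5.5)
Your proof is correct, and it follows the route the paper indicates: via \cref{cor:main}, extending Wickstead's unital argument (where $\pi(e)=\id_X$). You apply the corollary to the Dedekind complete range $\pi(e)X$ of the positive projection $\pi(e)$, on which $\pi(p)$ restricts to a positive projection with constant diagonal $\alpha$; the only step left implicit, $R\ge 0$, follows at once from $x\ge x\wedge e=0$.
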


\subsection{\texorpdfstring{$\sigma $}{sigma}-Lebesgue locally convex-solid
topologies}\label{sec:lcs}

The following question appears in \cite[Open Problem 8.1]{aliprantis1974}:
is the completion of a $\sigma $-Lebesgue Hausdorff locally
convex-solid vector lattice also $\sigma $-Lebesgue?
The answer turns out to be negative, and a counterexample is formalized in
\gh{https://github.com/davidmunozlahoz/SigmaLebesgueCompletion}.

\begin{prop}
    <\gh[sigmaLebesgueTopology_not_preserved_by_completion]{https://github.com/davidmunozlahoz/SigmaLebesgueCompletion/blob/0778e0584ca78cc177910d17152f53bc0689e0f2/SigmaLebesgueCompletion/Counterexample.lean\#L316}>
    There exists a $\sigma $-Lebesgue Hausdorff locally convex-solid
    vector lattice whose completion is not $\sigma $-Lebesgue.
\end{prop}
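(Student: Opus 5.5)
The plan is to build the counterexample from a vector lattice in which the $\sigma$-Lebesgue property holds for purely algebraic reasons, whatever Hausdorff locally solid topology is put on it. All of the work then goes into choosing a topology whose completion is large enough to contain a bad decreasing sequence. The candidate I will try first is $X=c_{00}$, the finitely supported real sequences with the coordinatewise order. At the step I expect to be hardest (choosing the topology) I only have a direction, not a verified construction.

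\textbf{Step 1: $\sigma$-Lebesgue is automatic on $X$.} Let $x_n\downarrow 0$ in $X$. Every $x_n$ lies in the order interval $[0,x_1]$, which sits inside the finite-dimensional sublattice $\R^{F}$, where $F=\operatorname{supp}x_1$.

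We need that the infimum taken in $X$ agrees with the coordinatewise infimum. It does: if $\inf_n x_n(k)=c>0$ for some $k\in F$, then $c\,e_k\in X$ is a nonzero lower bound of $(x_n)$, which is impossible. So $x_n\to 0$ coordinatewise on the finite set $F$.

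Every Hausdorff linear topology on a finite-dimensional space is the Euclidean one. Hence $x_n\to 0$ in any Hausdorff locally convex-solid topology $\tau$ on $X$. This step is routine.

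\textbf{Step 2: choose $\tau$ so that the completion $\widehat X$ is not $\sigma$-Lebesgue.} I would take $\tau$ to be generated by Riesz seminorms. Every such seminorm is continuous on $c_{00}$, but some of them do not behave like $\ell_\infty$ or $c_0$ norms on tails.

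The concrete goal is the following. $\widehat X$ should contain a positive element $u$ dominating every $e_k$ (morally $u=\one_\N$), such that:
\begin{enumerate}
\item the tails $u_n=u\,\one_{\{k\ge n\}}$ (obtained via the band projections onto the atoms, which extend by continuity) satisfy $u_n\downarrow 0$ in $\widehat X$;
\item some $\tau$-continuous seminorm stays bounded away from $0$ on every $u_n$.
\end{enumerate}

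Option (a), normed. My first attempt is a lattice norm on $c_{00}$ whose completion has atoms spanning a dense subspace but whose norm is not $\sigma$-order continuous on order intervals. Examples of this type could come from Marcinkiewicz or weak-$\ell^1$-type norms.

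Option (b), non-normable. If option (a) is obstructed (for instance because such completions always turn out to be order continuous), I would switch to a locally convex-solid topology. It would be generated by the coordinate seminorms $|x_k|$ together with a family of seminorms $x\mapsto\varphi(|x|)$, where the $\varphi$ are chosen so that no truncation argument forces tails of elements of $\widehat X$ to vanish.

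\textbf{Step 3: verify the decreasing sequence.} In either option, I would check three things:
\begin{enumerate}
\item $\widehat X$ is a Hausdorff locally convex-solid vector lattice, using the standard fact that the completion of a locally solid Riesz space is again one;
\item $\inf_n u_n=0$ in $\widehat X$, using that a positive lower bound must have every coordinate $\le 0$, since coordinate functionals are continuous lattice homomorphisms, and hence must be $0$ because the atoms separate points;
\item the distinguished seminorm of $u_n$ does not tend to $0$.
\end{enumerate}

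\textbf{Main obstacle.} The main obstacle is Step 2: exhibiting a topology on $c_{00}$ (or on a variant such as a lattice whose order intervals are finite-dimensional) that is locally solid, convex and Hausdorff, and whose completion genuinely contains a non-$\sigma$-order-continuous element. Steps 1 and 3 are bookkeeping once that topology is fixed.
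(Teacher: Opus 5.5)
Your Step 1 is correct. Step 2, however, is not just unfinished: on $X=c_{00}$ no Hausdorff locally solid topology $\tau$ can satisfy your requirement (ii), so options (a) and (b) are both dead ends.

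\textbf{Why Step 2 fails on $c_{00}$.} Let $Q_n$ be the band projection onto $\operatorname{span}\{e_1,\dots,e_{n-1}\}$. Since $|(I-Q_n)x|\le|x|$ and $\tau$ has a base of solid neighbourhoods, $I-Q_n$ is continuous and extends to $\widehat X$. The inequality $|(I-Q_n)z|\le|z|$ persists on $\widehat X$ because the positive cone is closed. Now take $u\in\widehat X$ and a solid neighbourhood $W$ of $0$ in $\widehat X$, and choose $x\in X$ with $u-x\in W$. For $n>\max\operatorname{supp}x$ you get $(I-Q_n)u=(I-Q_n)(u-x)$, whose modulus is at most $|u-x|$. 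Hence $(I-Q_n)u\in W$.

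So the tails of \emph{every} element of $\widehat X$ tend to $0$, whatever $\tau$ is. In particular, no lattice norm on $c_{00}$ has a completion that fails order continuity. Add a coordinatewise argument on the finitely many remaining coordinates: if the $k$-th coordinates of $y_m\downarrow 0$ stayed above some $c>0$, then $c\,e_k$ would be a positive lower bound. Together these show that $\widehat X$ is always $\sigma$-Lebesgue, indeed Lebesgue.

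\textbf{The general obstruction.} This tells you what any counterexample must look like.
\begin{enumerate}
\item On $c_{00}$, and on any vector lattice whose order intervals are finite-dimensional, every Hausdorff locally solid topology is pre-Lebesgue. An order bounded increasing sequence lives in a finite-dimensional subspace, so it converges there.
\item The pre-Lebesgue property always passes to the completion. Take $0\le w_n\uparrow\le x$ with $x\in X$. Approximate each $w_k$ by some $v_k\in[0,x]_X$, with errors in suitably nested solid neighbourhoods. Then compare $w_n$ with the increasing sequence $\bigl(\bigvee_{k=N}^{n}v_k\bigr)_{n\ge N}$ in $X$, using $\bigl|\bigvee_{k=N}^{n}v_k-w_n\bigr|\le\sum_{k=N}^{n}|v_k-w_k|$. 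A bound $y\in\widehat X$ reduces to this case through $w_n\wedge x$, since $0\le w_n-w_n\wedge x\le(y-x)^+$.
\item A complete pre-Lebesgue space is Lebesgue. A net decreasing to $0$ is then Cauchy (the sequence version upgrades to nets by extracting a non-Cauchy subsequence), and its limit must be its infimum.
\end{enumerate}

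Hence a counterexample must be $\sigma$-Lebesgue but \emph{not} pre-Lebesgue. It has to contain an order bounded increasing sequence that is not Cauchy. That sequence cannot have a supremum in $X$, since otherwise $\sigma$-Lebesgue applied to $\sup_k x_k-x_n$ would force convergence; so $X$ is not even $\sigma$-Dedekind complete.

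The missing idea is therefore a lattice and topology in which the $\sigma$-Lebesgue property comes from a genuinely countable mechanism, while such a non-Cauchy bounded sequence survives into $\widehat X$. Local finite-dimensionality can never supply this. The paper gives no written argument for this proposition, only the formalized counterexample, so this construction is exactly the piece you would have to provide.
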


\section{Some mathematical insights}\label{sec:insights}

This section collects some mathematical ideas that appeared in the
process of building the library and that we believe are relevant
enough to deserve a detailed explanation. Throughout this section, $n \in
\N$ denotes a natural number, and $X$ denotes a vector lattice.

\subsection{Vanishing lattice-linear expressions}

A classical result in the theory of vector lattices states that every
expression involving the linear and lattice operations (i.e., sums,
scalar multiplications, suprema, and infima) that holds in $\R$ must
also hold in every vector lattice. This result has important pragmatic
implications (it is relatively easy to check expressions in the reals,
because the order is total) and theoretical implications (for
instance, it is key in the concrete construction of the free vector
lattice).

When trying to formalize this result in Lean, we were expecting to do
it for Archimedean vector lattices using Kakutani's theorem. This is
the approach traditionally taken in Banach lattices monographs; see, for example,
\cite[Section 1.d]{lindenstrauss_tzafriri1979} and \cite[Theorem
2.1.10]{meyer-nieberg1991}.\footnote{This technique can be used to
prove something stronger, namely, positively homogeneous function
calculus, which is what the cited monographs end up proving.} We even
pointed the LLM toward this approach. However, the LLM took a different
route that surprised us, providing a proof that is elementary, does
not depend on the axiom of choice, and works for every vector lattice
(Archimedean or not).  This is remarkable, since in the more general
context of vector lattices, the result is proved using the
representation of a vector lattice inside the direct product of
totally ordered vector lattices, a technique that also relies on the
axiom of choice (see, for instance, \cite{labuschagne_vanalten2007}).

The result can also be seen as a consequence of the fact that the free
vector lattice can be realised as a vector lattice of real-valued
functions. This was proved by R.\ D.\ Bleier \cite{bleier1973} using
convex analysis, an approach that probably inspired the proof presented
here. Bleier's argument is not entirely elementary, since his proof of
\cite[Lemma 2.1]{bleier1973} again relies on the representation of a
vector lattice as a subdirect product of totally ordered vector
lattices; however, as observed by M.\ de Jeu
\cite[Lemma 2.1]{dejeu2021}, the use of representation theorems can be
avoided, making
the argument completely elementary.\footnote{Bleier's result also
follows from a more general theorem of M.\ de Jeu and X.\ Jiang
\cite[Theorem 4.5]{dejeu_jiang2026}, which canonically realises the
free vector lattice over a pre-ordered vector space as a vector
sublattice of a function lattice.} Nevertheless, the argument
used in the library avoids free objects altogether, which keeps it
elementary and, more importantly, makes it easier to formalize. This is particularly relevant
because Bleier's approach requires the existence of the free vector
lattice, a fact that comes from universal algebra and whose
formalization would require developing a completely different area in
Lean.

For these reasons, we believe it is worth explaining the details of
this new proof, together with its formalization. Even though the idea of the
proof is very clear, writing down all the details is cumbersome
because doing so requires manipulating formal expressions, which are
defined recursively. However, since Lean is by design very good at
dealing with recursive definitions, this is one of those rare
instances in which an argument is more elegant and direct in Lean than
in written natural language!

To state the theorem properly, we first need the formal definition of
``expressions involving the linear and lattice operations.''

\begin{defn}<\gh[LLexpr]{https://github.com/davidmunozlahoz/banlat/blob/ef1bc06da46e90ed7934b35c11be07d7395bdaec/BanLat/LLexpr.lean\#L22}>
    A \emph{lattice-linear expression $\Phi $ in the
    formal variables $t_1,\ldots ,t_n$} is a formal expression
    constructed inductively in the following way:
    \begin{enumerate}
        \item $0$ is a lattice-linear expression;
        \item $t_1$, $t_2$, \ldots, $t_n$ are lattice-linear expressions;
        \item if $\Phi _1$ and $\Phi _2$ are lattice-linear
            expressions in $t_1,\ldots ,t_n$, then $\Phi _1+\Phi _2$,
            $\lambda \Phi _1$ (for $\lambda \in \R$), $\Phi _1 \vee \Phi _2$, and $\Phi _1
            \wedge \Phi _2$ are lattice-linear expressions in
            $t_1,\ldots ,t_n$.
    \end{enumerate}
\end{defn}

\begin{rem}
    To abbreviate, we denote the fact that $\Phi $ is a lattice-linear
    expression in the formal variables $t_1,\ldots, t_n$ by $\Phi
    [t_1,\ldots ,t_n]$. Note that not all the variables need appear
    explicitly in $\Phi $; for example, $\Phi [t_1,\ldots ,t_n]=t_1$
    is a lattice-linear expression in $t_1,\ldots ,t_n$. In
    particular, every lattice-linear expression in $t_1,\ldots ,t_n$
    is also a lattice-linear expression in $t_1,\ldots ,t_m$ for all
    $m\ge n$.
\end{rem}

It should be intuitively clear that every lattice-linear expression in
$n$ variables can be ``evaluated'' at $n$ arbitrary elements of a
vector lattice in the ``obvious way.'' This notion is defined
recursively as follows.

\begin{defn}
    <\gh[eval]{https://github.com/davidmunozlahoz/banlat/blob/ef1bc06da46e90ed7934b35c11be07d7395bdaec/BanLat/LLexpr.lean\#L38}>
    Let $\Phi [t_1,\ldots ,t_n]$ be a
    lattice-linear expression, and let
    $x_1,\ldots ,x_n \in X$. We define the \emph{evaluation} of $\Phi$ at
    $x_1,\ldots ,x_n$ as the element $\Phi (x_1,\ldots ,x_n) \in X$
    defined recursively by
    \begin{enumerate}
        \item if $\Phi =0$, $\Phi (x_1,\ldots ,x_n)=0$.
        \item if $\Phi =t_i$, $\Phi (x_1,\ldots ,x_n)=x_i$.
        \item if $\Phi =\Phi _1+\Phi _2$, for some lattice-linear
            expressions $\Phi _1$ and $\Phi _2$, $\Phi (x_1,\ldots
            ,x_n)=\Phi _1(x_1,\ldots ,x_n)+\Phi _2(x_1,\ldots ,x_n)$.
        \item if $\Phi =\lambda \Phi _1$, for some lattice-linear
            expression $\Phi _1$ and some $\lambda \in \R$, $\Phi (x_1,\ldots
            ,x_n)=\lambda \Phi _1(x_1,\ldots ,x_n)$.
        \item if $\Phi =\Phi _1\vee \Phi _2$, for some lattice-linear
            expressions $\Phi _1$ and $\Phi _2$, $\Phi (x_1,\ldots
            ,x_n)=\Phi _1(x_1,\ldots ,x_n)\vee \Phi _2(x_1,\ldots ,x_n)$.
        \item if $\Phi =\Phi _1\wedge \Phi _2$, for some lattice-linear
            expressions $\Phi _1$ and $\Phi _2$, $\Phi (x_1,\ldots
            ,x_n)=\Phi _1(x_1,\ldots ,x_n)\wedge \Phi _2(x_1,\ldots ,x_n)$.
    \end{enumerate}
\end{defn}

The above is an example of a definition that is shorter to write
in Lean than in \LaTeX, all thanks to Lean's pattern matching.

\begin{defn}
    <\gh[Vanishes]{https://github.com/davidmunozlahoz/banlat/blob/ef1bc06da46e90ed7934b35c11be07d7395bdaec/BanLat/LLexpr.lean\#L127}>
    We say that a lattice-linear expression $\Phi[t_1,\ldots ,t_n]$
    \emph{vanishes on $X$} if $\Phi (x_1,\ldots ,x_n)=0$ for all
    $x_1,\ldots ,x_n \in X$.
\end{defn}

We can now state the theorem.

\begin{thm}
    <\gh[vanishes_of_vanishes_real]{https://github.com/davidmunozlahoz/banlat/blob/ef1bc06da46e90ed7934b35c11be07d7395bdaec/BanLat/LLexpr.lean\#L476}>
    \label{thm:llexpr_zero}
    A lattice-linear expression that vanishes on $\R$ must also vanish
    on every vector lattice.
\end{thm}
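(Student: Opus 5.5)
The plan is to reduce everything to a normal form and then use a finite-dimensional alternative theorem on $\R^n$, transferring the resulting linear identities to $X$. No representation theorems and no choice are needed.

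\textbf{Step 1 (normal forms).} By induction on $\Phi[t_1,\ldots,t_n]$, I will show there are finitely many linear forms $\ell_{ij}(t)=\sum_{r}c_{ijr}t_r$ with real coefficients such that
\[
\Phi(x_1,\ldots,x_n)=\bigvee_{i}\bigwedge_{j}\ell_{ij}(x_1,\ldots,x_n)
\]
for all $x_1,\ldots,x_n$ in \emph{every} vector lattice $X$, with the same formal data for all $X$. Dually, there are forms $m_{kl}$ with $\Phi=\bigwedge_k\bigvee_l m_{kl}$.

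The induction uses only identities valid in any vector lattice:
\begin{itemize}
\item distributivity of the lattice;
\item translation invariance $a+(b\vee c)=(a+b)\vee(a+c)$;
\item $\lambda(b\vee c)=\lambda b\vee\lambda c$ for $\lambda\ge 0$;
\item $-(b\vee c)=(-b)\wedge(-c)$.
\end{itemize}
The cases $0$ and $t_i$ are trivial. Scalar multiples flip between the two forms when $\lambda<0$, which is why both forms are carried along simultaneously. Sums of two max-min forms are expanded by translation invariance into a max-min over pairs of indices. Suprema and infima are handled by distributivity, converting between the two forms as needed.

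\textbf{Step 2 (the inequality $\Phi\le 0$ in $X$).} It suffices that each $\bigwedge_j\ell_{ij}(x)\le 0$. Fix $i$. Since $\Phi$ vanishes on $\R$, we have $\min_j\ell_{ij}(v)\le 0$ for all $v\in\R^n$, so there is no $v$ with $\ell_{ij}(v)>0$ for all $j$. By Gordan's alternative theorem on $\R^n$, there are $\lambda_j\ge 0$, not all zero, with $\sum_j\lambda_j\ell_{ij}=0$ as linear forms. This is purely finite-dimensional linear algebra and can be proved by elementary induction or Fourier--Motzkin.

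Because this is a formal identity of coefficients, it holds at any $x\in X^n$. Writing $a_j=\ell_{ij}(x)$ and $\Lambda=\sum_j\lambda_j>0$, we get
\[
\Lambda\bigwedge_j a_j\le\sum_j\lambda_j a_j=0,
\]
hence $\bigwedge_j a_j\le 0$.

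\textbf{Step 3 (the inequality $\Phi\ge 0$).} This is symmetric, using the form $\bigwedge_k\bigvee_l m_{kl}$. Equivalently, apply Step 2 to $-\Phi$, which also vanishes on $\R$. Combining Steps 2 and 3 gives $\Phi(x)=0$.

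\textbf{Expected obstacle.} The main difficulty I expect is Step 1. It is conceptually easy, but the bookkeeping of expanding sums and lattice operations of max-min forms over index sets is where formalization effort concentrates. A cleaner route may be to carry only the max-min form and obtain the min-max form as $-(\text{max-min form of }-\Phi)$. The Gordan step must also be made available in elementary form, either from mathlib's Farkas-type lemmas or by a direct induction on $n$.
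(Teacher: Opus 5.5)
Your proof is correct, but it uses a different normal form from the paper's.

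The paper reduces every expression to a \emph{difference of two suprema} of linear forms, $\bigvee_i \bm a_i\cdot t-\bigvee_j \bm b_j\cdot t$, writing $\bm a\cdot t=\sum_k a_kt_k$. This class is closed under all the operations using only translation invariance and positive homogeneity:
\begin{enumerate}
\item a sum of two suprema is a supremum over pairs of indices;
\item a negative scalar swaps the two suprema;
\item $(A-B)\vee(C-D)=\bigl((A+D)\vee(C+B)\bigr)-(B+D)$;
\item $\wedge$ is reduced to $\vee$ by negation.
\end{enumerate}
So it needs neither distributivity nor a dual form, and its data are just two coefficient matrices. Your max--min form instead relies on distributivity of vector lattices. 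You must also carry, or convert between, two forms with nested, ragged index families, which is exactly the bookkeeping you anticipate.

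The analytic core is the same, though. Since $\bigvee_i \bm a_i\cdot t-\bigvee_j \bm b_j\cdot t=\bigvee_i\bigwedge_j(\bm a_i-\bm b_j)\cdot t$, the paper's normal form is a special case of yours. Its separation lemma says that $\bm a_i\cdot\lambda\le\bigvee_j\bm b_j\cdot\lambda$ for all $\lambda$ implies $\bm a_i\in\conv(\bm b_1,\ldots,\bm b_l)$. This is precisely Gordan's alternative for the forms $(\bm a_i-\bm b_j)\cdot t$. Its convex-combination estimate is your inequality $\Lambda\bigwedge_j a_j\le\sum_j\lambda_ja_j$. Both arguments therefore verify one min-term at a time. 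The paper gets equality of the two suprema by symmetry, whereas you prove $\Phi\le 0$ and apply it to $-\Phi$.

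What your route buys is a key lemma that can be proved by Fourier--Motzkin, with no topology. The paper obtains convex-hull membership from Hahn--Banach separation plus compactness, which costs nothing in Lean because it is already in mathlib.
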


The idea of the proof is the following. Consider two arbitrary lattice-linear
expressions $\Phi[t_1,\ldots ,t_n]$ and $\Psi[t_1,\ldots ,t_n]$ that
are suprema of linear expressions, that is, that can be written as
\[
    \Phi[t_1,\ldots ,t_n] =\bigvee
    _{i=1}^{m}\sum_{k=1}^{n}a_{ik}t_k\quad\text{and}\quad\Psi [t_1,\ldots
    ,t_n]=\bigvee_{j=1} ^{l} \sum_{k=1}^{n}b_{ik}t_k,
\]
for some $m,l \in \N$ and $a_{ik},b_{ik}\in \R$. First we are going to
show, using elementary computations, that if $\Phi (\lambda _1,\ldots
,\lambda _n)=\Psi (\lambda _1,\ldots ,\lambda _n)$ for all $\lambda
_1,\ldots ,\lambda _n \in \R$, then $\Phi (x_1,\ldots ,x_n)=\Psi
(x_1,\ldots ,x_n)$ for all $x_1,\ldots ,x_n$ in an arbitrary vector
lattice $X$.

Next we are going to prove that, if now
$\Phi [t_1,\ldots ,t_n]$ is an arbitrary lattice-linear expression,
then there exists an ``equivalent''
lattice-linear expression $\tilde \Phi [t_1,\ldots ,t_n]$ (equivalent
in the sense that $\Phi (x_1,\ldots ,x_n)=\tilde \Phi (x_1,\ldots ,x_n)$
for every $x_1,\ldots ,x_n \in X$) of the
form
\[
    \Phi[t_1,\ldots ,t_n] =\bigvee
    _{i=1}^{m}\sum_{k=1}^{n}a_{ik}t_k-\bigvee_{j=1} ^{l}
    \sum_{k=1}^{n}b_{jk}t_k.
\]
This is the step that requires working recursively on the definition
of $\Phi $, and that is very clean in Lean but technical in
written math. Once this is proved, it is easy to conclude: if
$\Phi $ vanishes on the reals, so does $\tilde \Phi $, which by the
previous paragraph implies that the two suprema appearing in the
expression of $\tilde \Phi $ must evaluate to the same element on every
vector lattice. Thus $\tilde \Phi $ vanishes on every vector lattice,
and so does $\Phi $.

We now turn to the actual details of the proof.
We start with two elementary lemmas. For $\bm a_1,\ldots ,\bm a_m \in
\R^{n}$, we denote by $\conv(\bm a_1,\ldots ,\bm a_m)$ their convex
hull.

\begin{lem}
    <\gh[mem_convexHull_of_forall_le_sup]{https://github.com/davidmunozlahoz/banlat/blob/ef1bc06da46e90ed7934b35c11be07d7395bdaec/BanLat/LLexpr.lean\#L263}>
    \label{lem:inconvhull}
    Let $m \in \N$, let $\bm a_{i}=(a_{i1},\ldots ,a_{i n}) \in \R^{n}$, for
    $i=1,\ldots ,m$, and let $\bm p =(p_1,\ldots ,p_n)\in \R^{n}$.
    If, for every $(\lambda _1,\ldots ,\lambda _n)\in \R^{n}$,
    \[
    \sum_{j=1}^{n}\lambda _jp_j \le \bigvee _{i=1}^{m} \sum_{j=1}^{n}
    \lambda _j a_{ij},
    \]
    then $\bm p \in \conv(\bm a_1, \ldots ,\bm a_m)$.
\end{lem}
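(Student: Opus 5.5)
We argue by contraposition, using the separating hyperplane theorem in $\R^{n}$. First note that the hypothesis forces $m\ge 1$: if $m=0$, the empty supremum is not a real number (or, under any convention, the inequality fails for a suitable $\bm\lambda$). So we may assume $C=\conv(\bm a_1,\ldots ,\bm a_m)$ is non-empty.

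The set $C$ is the convex hull of finitely many points. It is therefore a non-empty, compact, convex subset of $\R^{n}$; compactness holds because $C$ is the continuous image of the standard simplex.

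Suppose $\bm p\notin C$. By strict separation of a point from a compact convex set, there is $\bm\lambda=(\lambda_1,\ldots ,\lambda_n)\in \R^{n}$ with
\[
\sum_{j=1}^{n}\lambda_j p_j > \sup_{\bm c\in C}\sum_{j=1}^{n}\lambda_j c_j .
\]
This separation can be made elementary. Let $\bm c_0$ be the point of $C$ nearest to $\bm p$; it exists by compactness. Put $\bm\lambda=\bm p-\bm c_0\neq 0$. The variational inequality $\langle \bm\lambda, \bm c-\bm c_0\rangle\le 0$ for all $\bm c\in C$ then gives
\[
\langle\bm\lambda,\bm c\rangle\le\langle \bm\lambda,\bm c_0\rangle<\langle\bm\lambda,\bm c_0\rangle+|\bm\lambda|^2=\langle\bm\lambda,\bm p\rangle .
\]

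Since each $\bm a_i\in C$, we get $\sum_j\lambda_j p_j>\sum_j \lambda_j a_{ij}$ for every $i$. Hence
\[
\sum_j\lambda_j p_j>\bigvee_{i=1}^{m}\sum_j\lambda_j a_{ij},
\]
which contradicts the hypothesis applied to this $\bm\lambda$. Therefore $\bm p\in C$.

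The only substantive step is the separation. In the paper's setting it can be taken from mathlib's geometric Hahn–Banach results (for instance, separating a point from a closed convex set). The remaining work is bookkeeping: identifying linear functionals on $\R^{n}$ with dot products against vectors $\bm\lambda$, and checking that the convex hull of a finite set is closed. If those API conversions prove awkward, I would fall back on the nearest-point argument above, which needs only compactness and the expansion of $|\bm p-(\bm c_0+t(\bm c-\bm c_0))|^2$ for small $t>0$.
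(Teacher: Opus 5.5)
Your proof is correct and is essentially the paper's argument: the convex hull $\conv(\bm a_1,\ldots,\bm a_m)$ is compact, and a point can be strictly separated from it (the paper cites mathlib's \texttt{geometric\_hahn\_banach\_point\_closed}), while your nearest-point argument is a valid self-contained proof of that separation step. One small correction: your parenthetical claim that for $m=0$ the hypothesis fails ``under any convention'' is false, since if the empty supremum in $\R$ is taken to be $0$ (as \texttt{sSup} is in mathlib), then $\bm p=0$ satisfies the hypothesis while $\conv(\emptyset)=\emptyset$, so the lemma genuinely needs $m\ge 1$, which the paper tacitly assumes.
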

\begin{proof}
    Since $\conv(\bm a_1, \ldots ,\bm a_m)$ is compact, the result is
    an immediate consequence of the hyperplane separation
    theorem (see, for instance, \cite[Section
    2.5.1]{boyd_vandenberghe2004}; or
    \gh[geometric_hahn_banach_point_closed]{https://github.com/leanprover-community/mathlib4/blob/c5ea00351c28e24afc9f0f84379aa41082b1188f/Mathlib/Analysis/LocallyConvex/Separation.lean\#L223}
    in \mathlib).
\end{proof}

\begin{lem}
    <\gh[linearCombination_le_eval_of_mem_convexHull]{https://github.com/davidmunozlahoz/banlat/blob/ef1bc06da46e90ed7934b35c11be07d7395bdaec/BanLat/LLexpr.lean\#L300}>
    \label{lem:leconvhull}
    Let $\Phi [t_1,\ldots ,t_n]$ be a lattice-linear
    expression of the form $\Phi [t_1,\ldots ,t_n]=\bigvee_{i=1}
    ^{m}\sum_{j=1}^{n}a_{ij}t_j$, for some $m \in \N$ and $a_{ij}\in
    \R$. Set $\bm a_i=(a_{i1},\ldots
    ,a_{i n})$, for $i=1,\ldots ,m$.
    If $\bm p=(p_1,\ldots ,p_n) \in \conv(\bm a_1,\ldots ,\bm a_m)$,
    then
    \[
    \sum_{i=1}^{n}p_i x_i \le \Phi (x_1,\ldots ,x_n),
    \]
    for every $x_1,\ldots ,x_n \in X$.
\end{lem}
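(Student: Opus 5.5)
Write $\bm p=\sum_{i=1}^{m}\mu_i\bm a_i$ with $\mu_i\ge 0$ and $\sum_{i=1}^{m}\mu_i=1$, which is possible by the definition of the convex hull. Then for every $j$ we have $p_j=\sum_{i}\mu_i a_{ij}$. Interchanging the finite sums gives
\[
\sum_{j=1}^{n}p_jx_j=\sum_{i=1}^{m}\mu_i\Big(\sum_{j=1}^{n}a_{ij}x_j\Big),
\]
using only the vector space axioms of $X$. Write $y_i=\sum_{j}a_{ij}x_j$ and $y=\bigvee_{i=1}^m y_i$. First I would check, by unfolding the recursive definition of evaluation, that $y=\Phi(x_1,\ldots,x_n)$. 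The finite supremum is an iterated binary $\vee$, and each summand evaluates as the corresponding linear combination. This is an easy induction on $m$ and on the number of summands. In Lean, the statement of the lemma will probably already fix a concrete construction of the expression (for instance a fold), so this step amounts to a simp lemma about that construction.

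The core inequality is then immediate from the vector lattice axioms. For each $i$ we have $y_i\le y$. Since $\mu_i\ge 0$, scalar multiplication by $\mu_i$ is monotone, so $\mu_iy_i\le \mu_i y$. Adding these inequalities, using that addition is order-preserving in an ordered vector space, gives
\[
\sum_i\mu_iy_i\le\sum_i\mu_i y=\Big(\sum_i\mu_i\Big)y=y .
\]

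The only point needing any care is the bookkeeping rather than the mathematics. We must handle $m=0$, where the convex hull is empty and the statement is vacuous, so a possibly degenerate definition of an empty supremum is harmless. We must also move between the hull membership formulation (in mathlib, a finite convex combination indexed possibly by a Finset of points rather than by indices $i$) and the indexed family $\bm a_i$. I expect this translation to be the main, if minor, obstacle. I would use the characterization of $\operatorname{convexHull}$ of a finite range as the set of $\sum_i\mu_i\bm a_i$ with $\mu$ a weight vector on $\{1,\ldots,m\}$, so that the weights are indexed directly by $i$.
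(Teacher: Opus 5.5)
Your proof is correct and follows the paper's argument: write $\bm p=\sum_i \mu_i \bm a_i$ as a convex combination, interchange the finite sums, bound each $\sum_j a_{ij}x_j$ by the supremum, and use $\mu_i\ge 0$ and $\sum_i\mu_i=1$. Your remarks on the vacuous case $m=0$ and on unfolding the evaluation of the iterated supremum are sound bookkeeping that the paper leaves implicit.
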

\begin{proof}
    Since $\bm p \in \conv(\bm a_1,\ldots ,\bm a_m)$, there exist $\mu _1,\ldots ,\mu _m
    \ge 0$ with $\sum_{i=1}^{m}\mu _i=1$ such that
    \[
    \bm p=\sum_{i=1}^{m}\mu _i \bm a_i.
    \]
    To conclude, simply compute
    \begin{align*}
        \sum_{j=1}^{n}p_j x_j &= \sum_{j=1}^{n} \bigg( \sum_{i=1}^{m}
        \mu _i a_{ij} \bigg) x_j\\
                              &= \sum_{i=1}^{m} \mu _i
                              \sum_{j=1}^{n} a_{ij} x_j\\
                              &\le \sum_{i=1}^{m} \mu _i
                              \bigvee_{i=1} ^{m} \sum_{j=1}^{n}
                              a_{ij}x_j\\
                              &= \Phi (x_1,\ldots ,x_n).\qedhere
    \end{align*}
\end{proof}

Now we can prove the theorem for lattice-linear expressions that are
suprema of linear expressions.

\begin{lem}
    <\gh[eval_eq_of_forall_real_eq]{https://github.com/davidmunozlahoz/banlat/blob/ef1bc06da46e90ed7934b35c11be07d7395bdaec/BanLat/LLexpr.lean\#L336}>
    \label{lem:suplin}
    Let $\Phi [t_1,\ldots ,t_n]$ and $\Psi [t_1,\ldots
    ,t_n]$ be lattice-linear expressions of the form
    \[
        \Phi[t_1,\ldots ,t_n] =\bigvee
        _{i=1}^{m}\sum_{k=1}^{n}a_{ik}t_k\quad\text{and}\quad\Psi [t_1,\ldots
        ,t_n]=\bigvee_{j=1} ^{l} \sum_{k=1}^{n}b_{jk}t_k,
    \]
    for some $m,l \in \N$ and $a_{ik},b_{ik}\in \R$. Suppose that
    $\Phi (\lambda _1,\ldots ,\lambda _n)=\Psi (\lambda _1,\ldots
    ,\lambda _n)$ for every $\lambda _1,\ldots ,\lambda _n \in \R$. Then
    $\Phi (x_1,\ldots ,x_n)=\Psi (x_1,\ldots ,x_n)$ for every
    $x_1,\ldots ,x_n \in X$.
\end{lem}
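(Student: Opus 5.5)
By symmetry it suffices to prove $\Psi(x_1,\ldots,x_n)\le \Phi(x_1,\ldots,x_n)$ for all $x_1,\ldots,x_n\in X$; exchanging the roles of $\Phi$ and $\Psi$ then gives the reverse inequality, and antisymmetry of the order yields equality. Since $\Psi(x_1,\ldots,x_n)$ is the supremum of the finitely many elements $\sum_{k=1}^{n}b_{jk}x_k$, $j=1,\ldots,l$, it is enough to show that each of these is dominated by $\Phi(x_1,\ldots,x_n)$. Indeed, the supremum of finitely many elements that are all below a fixed element is again below it (by induction on $l$ from the binary case).

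So fix $j\in\{1,\ldots,l\}$ and set $\bm p=\bm b_j=(b_{j1},\ldots,b_{jn})$. For every $(\lambda_1,\ldots,\lambda_n)\in\R^{n}$ the hypothesis gives
\[
\sum_{k=1}^{n}\lambda_k b_{jk}\le \Psi(\lambda_1,\ldots,\lambda_n)=\Phi(\lambda_1,\ldots,\lambda_n)=\bigvee_{i=1}^{m}\sum_{k=1}^{n}\lambda_k a_{ik}.
\]
Here we use that the evaluation of the supremum expression in $\R$ is the actual maximum of the real linear forms. This is exactly the hypothesis of \cref{lem:inconvhull}, so $\bm b_j\in\conv(\bm a_1,\ldots,\bm a_m)$. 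Then \cref{lem:leconvhull}, applied in $X$, gives $\sum_{k=1}^{n}b_{jk}x_k\le\Phi(x_1,\ldots,x_n)$ for all $x_1,\ldots,x_n\in X$, which is what we needed.

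One degenerate point has to be checked: how $\bigvee_{i=1}^{m}$ is interpreted when $m=0$ (or $l=0$), since the lattice has no bottom element. I would assume $m,l\ge 1$, or handle the empty case through the convention used in the formal definition (for instance, an empty supremum read as the expression $0$, which corresponds to a single zero row). With this settled, I do not expect a real obstacle: the content lies in \cref{lem:inconvhull} (separation) and \cref{lem:leconvhull}. The only remaining bookkeeping is to match the recursive evaluation of a finite iterated supremum with the order-theoretic statements ``each term is below the supremum'' and ``the supremum is below any common upper bound.''
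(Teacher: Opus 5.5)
Your proposal is correct and follows essentially the same route as the paper. You reduce by symmetry to domination of each linear term of one supremum, use \cref{lem:inconvhull} to place the coefficient vector in the convex hull of the other family, and conclude with \cref{lem:leconvhull}; you just prove $\Psi\le\Phi$ where the paper proves $\Phi\le\Psi$.
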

\begin{proof}
    Let $x_1,\ldots ,x_n \in X$.
    Since the roles of $\Phi $ and $\Psi $ are interchangeable, it
    suffices to prove that
    \[
    \Phi (x_1,\ldots ,x_n)=\bigvee_{i=1}
    ^{m}\sum_{k=1}^{n}a_{ik}x_k\le \Psi (x_1,\ldots ,x_n).
    \]
    This, in turn, is the same as showing that
    \begin{equation}\label{eq:lin_leq_ll}
       \sum_{k=1}^{n}a_{ik}x_k\le \Psi (x_1,\ldots ,x_n)
    \end{equation}
    holds for each $i=1,\ldots ,m$. Fix $i \in \{1,\ldots ,m\}$. By assumption, for all $(\lambda
    _1,\ldots ,\lambda _n)\in \R^{n}$,
    \[
       \sum_{k=1}^{n}a_{ik}\lambda _k\le \Psi (\lambda _1,\ldots
       ,\lambda _n)=\bigvee_{j=1} ^{l} \sum_{k=1}^{n}b_{jk}\lambda _k.
    \]
    According to \cref{lem:inconvhull}, this implies that $\bm a_i \in
    \conv(\bm b_1,\ldots ,\bm b_l)$, where $\bm a_i = (a_{i
    1},\ldots ,a_{i n})$ and $\bm b_j=(b_{j 1},\ldots ,b_{j n})$ for
    $j=1,\ldots ,l$. Now \eqref{eq:lin_leq_ll} follows immediately
    from \cref{lem:leconvhull}.
\end{proof}

\begin{defn}
    Let $\Phi [t_1,\ldots ,t_n]$ and $\Psi [t_1,\ldots
    ,t_n]$ be lattice-linear expressions. We say that $\Phi $ and
    $\Psi $ are \emph{equivalent} if $\Phi (x_1,\ldots ,x_n)=\Psi
    (x_1,\ldots ,x_n)$ for every $x_1,\ldots ,x_n$ in a vector lattice
    $X$.
\end{defn}

\begin{defn}
    <\gh[NormalForm]{https://github.com/davidmunozlahoz/banlat/blob/ef1bc06da46e90ed7934b35c11be07d7395bdaec/BanLat/LLexpr.lean\#L347}>
    We say that a lattice-linear expression $\Phi
    [t_1,\ldots ,t_n]$ is in \emph{normal form} if it is of the form
    \[
        \Phi[t_1,\ldots ,t_n] =\bigvee
        _{i=1}^{m}\sum_{k=1}^{n}a_{ik}t_k-\bigvee_{j=1} ^{l}
        \sum_{k=1}^{n}b_{jk}t_k.
    \]
    for some $m, l \in \N$, and some $a_{ik}, b_{jk} \in \R$.
\end{defn}

The following is the last ingredient needed to complete the proof of
\cref{thm:llexpr_zero}. The proof of the following fact in Lean is
more elegant, and even shorter, than the one provided here. Also, the
computational nature of the proof makes it prone to typos, whereas the
formulas that appear in the Lean code are certainly correct.

\begin{thm}
    <\gh[normalize_eval]{https://github.com/davidmunozlahoz/banlat/blob/ef1bc06da46e90ed7934b35c11be07d7395bdaec/BanLat/LLexpr.lean\#L469}>
    \label{thm:normalize}
    For every lattice-linear expression $\Phi [t_1,\ldots ,t_n]$ there
    exists a lattice-linear expression $\tilde \Phi [t_1,\ldots ,t_n]$
    in normal form that is equivalent to $\Phi $.
\end{thm}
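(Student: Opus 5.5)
We argue by structural induction on $\Phi$, following the recursive definition of lattice-linear expressions. Throughout we write $\Phi\sim\Psi$ for equivalence, and a normal form as $A-B$ with $A=\bigvee_{i}\ell_i$, $B=\bigvee_j r_j$, where $\ell_i,r_j$ are linear expressions $\sum_k c_k t_k$. All manipulations use only identities valid in every vector lattice: translation invariance $(u\vee v)+w=(u+w)\vee(v+w)$, and distributivity of addition over finite suprema.

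\textbf{Base cases.} For $\Phi=0$ take $0-0$, writing $0$ as the one-term supremum of the zero linear form. For $\Phi=t_k$ take the one-term supremum $t_k$ minus the zero form.

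\textbf{Inductive step.} Assume $\Phi_1\sim A_1-B_1$ and $\Phi_2\sim A_2-B_2$.

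\emph{Sum.} We have $\Phi_1+\Phi_2\sim(A_1+A_2)-(B_1+B_2)$. Moreover $A_1+A_2=\bigvee_{i,i'}(\ell_i+\ell'_{i'})$, since $(\bigvee_i u_i)+(\bigvee_{i'}v_{i'})=\bigvee_{i,i'}(u_i+v_{i'})$ by translation invariance applied twice. The same holds for $B_1+B_2$.

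\emph{Scalar multiple.} For $\lambda\ge0$, use $\lambda\bigvee_i\ell_i=\bigvee_i\lambda\ell_i$. For $\lambda<0$, write $\lambda(A_1-B_1)=|\lambda|B_1-|\lambda|A_1$ and apply the previous case.

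\emph{Supremum.} Compute
\[
(A_1-B_1)\vee(A_2-B_2)=\bigl((A_1+B_2)\vee(A_2+B_1)\bigr)-(B_1+B_2),
\]
obtained by adding and subtracting $B_1+B_2$ and using translation invariance. The first term is a supremum of the two suprema $A_1+B_2$ and $A_2+B_1$, each of which is a supremum of linear forms by the sum case. A supremum of two finite suprema is a finite supremum over the concatenated index set.

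\emph{Infimum.} Use $u\wedge v=-((-u)\vee(-v))$, which reduces this case to the scalar ($\lambda=-1$) and supremum cases already handled.

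\textbf{Main obstacle.} There is little mathematics here; the difficulty is bookkeeping. One must index the coefficients of products and unions of index sets, for instance by $\{1,\ldots,m\}\times\{1,\ldots,m'\}$ and disjoint unions, and then verify that evaluation of the new finite supremum commutes with these reindexings.

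To keep this clean, we would first prove two auxiliary lemmas for finite families in any vector lattice:
\[
\Bigl(\bigvee_i u_i\Bigr)+\Bigl(\bigvee_{i'} v_{i'}\Bigr)=\bigvee_{i,i'}(u_i+v_{i'}),\qquad \lambda\bigvee_i u_i=\bigvee_i\lambda u_i \quad (\lambda\ge0).
\]
We would also prove that evaluating a linear form is linear in the coefficient vector. Each inductive case then becomes a short rewriting.

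Nonemptiness of the index sets (needed so that finite suprema make sense) is preserved by every construction above: the base cases use one-term suprema, and products and unions of nonempty sets are nonempty.
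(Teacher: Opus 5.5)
Your proof is correct and follows essentially the same argument as the paper. It is a structural induction with the same cases and the same identities: scale coefficients for $\lambda\ge 0$ and swap the two suprema for $\lambda<0$; use $(A_1-B_1)\vee(A_2-B_2)=\bigl((A_1+B_2)\vee(A_2+B_1)\bigr)-(B_1+B_2)$ for suprema; and use $u\wedge v=-((-u)\vee(-v))$ for infima.

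Your indexing of a sum of two suprema by pairs $(i,i')$ is in fact the correct bookkeeping. The paper's displayed formulas instead add coefficients index-wise with zero padding ($a_{ik}+c_{ik}$ over $i\le\max\{m_1,m_2\}$), and that fails in general. For the normal forms $t_1\vee t_2$ and $t_2\vee t_1$ it would give $t_1+t_2$ instead of $2(t_1\vee t_2)$, which is exactly the kind of slip the paper warns its written version is prone to.
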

\begin{proof}
    We prove it by induction on the definition
    of $\Phi $. If $\Phi =0$ or $\Phi =t_i$, for some $i$, then $\Phi
    $ is already in normal form and there is nothing to do. This
    proves the base case. Now suppose that there exist lattice-linear
    expressions $\Phi _1$ and $\Phi _2$
    such that precisely one of the following holds:
    \begin{enumerate}
        \item $\Phi =\Phi _1+\Phi _2$.
        \item $\Phi =\lambda \Phi _1$, for some $\lambda \in \R$.
        \item $\Phi =\Phi _1\vee \Phi _2$,
        \item $\Phi =\Phi _1\wedge  \Phi _2$.
    \end{enumerate}
    Use the induction hypothesis to obtain lattice-linear expressions
    $\tilde \Phi _1$ and $\tilde \Phi _2$ in normal form equivalent to
    $\Phi _1$ and $\Phi _2$, respectively. We can write, explicitly,
    \[
        \tilde\Phi_1[t_1,\ldots ,t_{n_1}] =\bigvee
        _{i=1}^{m_1}\sum_{k=1}^{n_1}a_{ik}t_k-\bigvee_{j=1} ^{l_1}
        \sum_{k=1}^{n_1}b_{jk}t_k,
    \]
    and
    \[
        \tilde\Phi_2[t_1,\ldots ,t_{n_2}] =\bigvee
        _{i=1}^{m_2}\sum_{k=1}^{n_2}c_{ik}t_k-\bigvee_{j=1} ^{l_2}
        \sum_{k=1}^{n_2}d_{jk}t_k,
    \]
    for some $n_1,n_2 \in \{1,\ldots ,n\}$, $m_1,m_2,l_1,l_2 \in \N$,
    and $a_{ik}, b_{jk}, c_{ik}, d_{jk} \in \R$.

    Below we give an explicit expression for $\tilde \Phi $ depending
    on how $\Phi $ is constructed. In each of the cases, it is clear
    that $\tilde \Phi $ is in normal form, and it is immediate to
    check that it is equivalent to $\Phi $ using the basic properties
    of vector lattices. Let $m=\max\{m_1,m_2\}$ and
    $l=\max\{l_1,l_2\}$. By convention, the coefficients that are not
    defined are set equal to $0$.
    \begin{enumerate}
        \item If $\Phi =\Phi _1+\Phi _2$, then
            \[
                \tilde \Phi [t_1,\ldots ,t_n]=\bigvee
        _{i=1}^{m}\sum_{k=1}^{n}(a_{ik}+c_{ik})t_k-\bigvee_{j=1} ^{l}
        \sum_{k=1}^{n}(b_{jk}+d_{jk})t_k.
            \]
        \item If $\Phi =\lambda \Phi _1$, with $\lambda \ge 0$, then
            \[
                \tilde\Phi[t_1,\ldots ,t_n] =\bigvee
                _{i=1}^{m_1}\sum_{k=1}^{n_1}(\lambda a_{ik})t_k-\bigvee_{j=1} ^{l_1}
                \sum_{k=1}^{n_1}(\lambda b_{jk})t_k,
            \]
            whereas if $\lambda <0$, then
            \[
                \tilde\Phi[t_1,\ldots ,t_n] =\bigvee
                _{i=1}^{l_1}\sum_{k=1}^{n_1}(-\lambda b_{ik})t_k-\bigvee_{j=1} ^{m_1}
                \sum_{k=1}^{n_1}(-\lambda a_{jk})t_k.
            \]
        \item If $\Phi =\Phi _1 \vee \Phi _2$, then
            \begin{align*}
                \tilde \Phi [t_1,\ldots ,t_n]=&\bigg(\bigvee_{i=1}
                ^{\max\{m,l\}}
            \sum_{k=1}^{n}(a_{ik}+d_{ik})t_k\bigg)\vee \bigg(
        \bigvee_{i=1} ^{\max\{m,l\}}\sum_{k=1}^{n}(c_{ik}+b_{ik})t_k
    \bigg)\\  &- \bigvee_{j=1} ^{l}\sum_{k=1}^{n}(b_{jk}+d_{jk})t_k.
            \end{align*}
        \item If $\Phi =\Phi _1 \wedge \Phi _2$, then $\Phi $ is
            equivalent to $- ((-\Phi _1)\vee (-\Phi _2))$, and we can
            iterate (ii) and (iii) to obtain the desired
            lattice-linear expression.
    \end{enumerate}
\end{proof}

\begin{proof}[Proof of \cref{thm:llexpr_zero}.]
    Let $\Phi [t_1,\ldots ,t_n]$ be a lattice-linear expression that
    vanishes on $\R$. By \cref{thm:normalize}, there exists a
    lattice-linear expression $\tilde \Phi $ in normal form that is
    equivalent to $\Phi $. Thus it suffices to show that $\tilde \Phi
    (x_1,\ldots ,x_n)=0 $ for every $x_1,\ldots ,x_n$ in a vector
    lattice $X$. Write
    \[
        \tilde \Phi[t_1,\ldots ,t_n] =\bigvee
        _{i=1}^{m}\sum_{k=1}^{n}a_{ik}t_k-\bigvee_{j=1} ^{l}
        \sum_{k=1}^{n}b_{jk}t_k
    \]
    for some $m, l \in \N$, and some $a_{ik}, b_{jk} \in \R$. By
    assumption,
    \[
    \bigvee _{i=1}^{m}\sum_{k=1}^{n}a_{ik}\lambda _k=\bigvee_{j=1} ^{l}
    \sum_{k=1}^{n}b_{jk}\lambda _k\quad\text{for every }\lambda
    _1,\ldots ,\lambda _n \in \R.
    \]
    It follows from \cref{lem:suplin} that
    \[
    \bigvee _{i=1}^{m}\sum_{k=1}^{n}a_{ik}x _k=\bigvee_{j=1} ^{l}
    \sum_{k=1}^{n}b_{jk}x _k.
    \]
    Therefore $\Phi (x_1,\ldots ,x_n)=0$.
\end{proof}

This result was then used to implement
\gh[llarith]{https://github.com/davidmunozlahoz/banlat/blob/ef1bc06da46e90ed7934b35c11be07d7395bdaec/BanLat/Tactic/LLexpr.lean\#L474},
a tactic that closes non-strict inequalities in vector lattices by
checking that the inequality holds in $\R$. This in turn is done using
the
\href{https://leanprover-community.github.io/mathlib4_docs/Mathlib/Tactic/Linarith/Frontend.html}{linarith}
tactic. The tactic also supports assumptions of the type $0\le x$.

We believe that this tactic could be useful in the process of
autoformalizing papers in order to manipulate lattice-linear
identities quickly and conveniently. However, for performance reasons,
we try to avoid its use in the library files that are high in the
import hierarchy. This tactic was implemented using heavy LLM
assistance, and it is far from optimal; we could certainly
use the advice from more experienced metaprogrammers to improve it.

\subsection{The Archimedean property}

In \mathlib, the
Archimedean property is defined for ordered commutative monoids as
follows.

\begin{defn}\label{def:arch1}
    \gh[Archimedean]{https://github.com/leanprover-community/mathlib4/blob/c5ea00351c28e24afc9f0f84379aa41082b1188f/Mathlib/Algebra/Order/Archimedean/Defs.lean\#L32}
    An ordered commutative monoid $R$ is said to be \emph{Archimedean}
    if for every two elements $x,y \in R$ such that $0<y$, there
    exists a natural number $n \in \N$ such that $x\le ny$.
\end{defn}

This definition, although standard in the theory
of linearly ordered groups, is stronger than the one used in the
theory of lattice-ordered groups and vector lattices. For this reason,
in \banlat, we had to introduce the following alternative definition.

\begin{defn}
    <\gh[IsVLArchimedean]{https://github.com/davidmunozlahoz/banlat/blob/ef1bc06da46e90ed7934b35c11be07d7395bdaec/BanLat/Basic.lean\#L366}>
    \label{def:arch2}
    A lattice-ordered group $G$ is said to be \emph{Archimedean} if
    whenever $x,y \in G$ are such that
    $ny\leq x$ for all $n \in \N$, then $y\le 0$.
\end{defn}

Definitions~\ref{def:arch1} and~\ref{def:arch2} are equivalent in
linearly ordered groups. However, \cref{def:arch1} is in general
stronger than \cref{def:arch2}: $\R^2$, with the order defined
pointwise, satisfies \cref{def:arch2} but not \cref{def:arch1}.

This is not the only notational subtlety that arose when building the
library. For instance, what in ordered vector spaces and vector
lattices is usually called order completeness or Dedekind
completeness, in lattice theory (and, therefore, in mathlib) is known
as conditionally completeness (\gh[ConditionallyCompleteLattice]{https://github.com/leanprover-community/mathlib4/blob/c5ea00351c28e24afc9f0f84379aa41082b1188f/Mathlib/Order/ConditionallyCompleteLattice/Defs.lean\#L46}).

\subsection{Locally convex-solid topologies}

A subset $S\subseteq X$ is \emph{solid} if $[-|x|,|x|]\subseteq S$
holds for every $x \in S$. A linear topology on $X$ is \emph{locally
solid} if $0$ has a neighbourhood basis of solid sets; $X$ equipped
with a locally solid topology is called a \emph{locally solid vector
lattice}
(\gh[IsLocallySolidVectorLattice]{https://github.com/davidmunozlahoz/banlat/blob/ef1bc06da46e90ed7934b35c11be07d7395bdaec/BanLat/LocallySolid/Basic.lean\#L16}).
Locally solid topologies abstract many of the order-topological
properties of Banach lattices. Recall that, in a vector space, a
linear topology is locally convex if $0$ has a neighbourhood basis of
convex sets. A particularly relevant family of locally solid
topologies are the ones that are also locally convex.

\begin{defn}<\gh[IsLocallyConvexSolidVectorLattice]{https://github.com/davidmunozlahoz/banlat/blob/ef1bc06da46e90ed7934b35c11be07d7395bdaec/BanLat/LocallySolid/LocallyConvexSolid.lean\#L16}>
    A linear topology on a vector lattice that is at the same time
    locally solid and locally convex is called a \emph{locally
    convex-solid topology}. A vector lattice together with a locally
    convex-solid topology is called a \emph{locally convex-solid
    vector lattice}.
\end{defn}

This definition is, almost verbatim, the one given in the monograph
    \cite[Page 59]{aliprantis_burkinshaw2003}. In the same book, and without any further
comment on the definition, it is assumed that locally convex-solid
topologies admit a neighbourhood basis at the origin formed by sets
that are both convex and solid. But this is not strictly what the
definition says: the definition establishes that there exist both a
neighbourhood basis of convex sets and a neighbourhood basis of solid
sets. That there exists a basis with both properties at the same time
is reasonably easy, but not entirely obvious.

This subtlety arose when trying to formalize the notion of locally
convex-solid topology in Lean. Since this is not addressed in
\cite{aliprantis_burkinshaw2003}, the main reference monograph on the topic, it seems
relevant to provide the mathematical details next.
These ideas can be found in much greater generality in \cite[Section
2]{bilokoytov2025}.

\begin{defn}<\gh[solidInterior]{https://github.com/davidmunozlahoz/banlat/blob/ef1bc06da46e90ed7934b35c11be07d7395bdaec/BanLat/LocallySolid/LocallyConvexSolid.lean\#L24}>
    Given $S\subseteq X$, we define its \emph{solid interior} as the subset
    \[
        \SolInt(S)=\{\, x \in S : [-|x|,|x|]\subseteq S \, \}.
    \]
\end{defn}

Clearly, the solid interior of $S$ is the largest solid set contained
in $S$. The following is the key fact we need about the solid
interior.

\begin{prop}<\gh[convex_solidInterior]{https://github.com/davidmunozlahoz/banlat/blob/ef1bc06da46e90ed7934b35c11be07d7395bdaec/BanLat/LocallySolid/LocallyConvexSolid.lean\#L35}>
    \label{prop:solintconv}
    In a vector lattice, the solid interior of a convex set is convex.
\end{prop}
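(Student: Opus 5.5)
Let $C\subseteq X$ be convex. We take $x,y\in\SolInt(C)$ and $t\in[0,1]$, and set $z=tx+(1-t)y$. By definition of the solid interior, it suffices to show that every $w\in X$ with $|w|\le|z|$ lies in $C$. Taking $w=z$ in particular also gives $z\in C$, so nothing further is needed.

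The key tool will be a Riesz-decomposition-type splitting. Since $|z|\le t|x|+(1-t)|y|$, any $w$ with $|w|\le|z|$ satisfies
\[
|w|\le t|x|+(1-t)|y|.
\]
The Riesz decomposition property (available in every vector lattice) then gives $w=u+v$ with
\[
|u|\le t|x|\quad\text{and}\quad|v|\le(1-t)|y|.
\]
To obtain this splitting concretely, we first decompose $w^+$ and $w^-$ separately below $t|x|+(1-t)|y|$. For instance, we take
\[
u_1=w^+\wedge t|x|,\qquad v_1=w^+-u_1,
\]
check that $0\le v_1\le(1-t)|y|$, and proceed similarly for $w^-$. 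We then set $u=u_1-u_2$ and $v=v_1-v_2$. Since $u_1\perp u_2$ (both are dominated by disjoint elements), we get $|u|=u_1+u_2\le t|x|$, and likewise for $v$.

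Next, we treat the endpoint cases $t=0$ and $t=1$ separately, where the claim is immediate. For $0<t<1$, we set
\[
x'=u/t,\qquad y'=v/(1-t).
\]
Then $|x'|\le|x|$ and $|y'|\le|y|$. Since $x,y\in\SolInt(C)$, this gives $x',y'\in C$. By convexity of $C$, we conclude $w=tx'+(1-t)y'\in C$.

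The main obstacle is the decomposition step, in particular producing $u$ and $v$ with the absolute-value bounds rather than only bounds for positive elements. We handle this by splitting into positive and negative parts as above. The remaining verification is that $v_1=w^+-w^+\wedge t|x|=(w^+-t|x|)^+\le(1-t)|y|$, which follows from $w^+\le t|x|+(1-t)|y|$. The disjointness of $u_1$ and $u_2$ follows from $u_1\le w^+$, $u_2\le w^-$ and $w^+\perp w^-$, so that $|u_1-u_2|=u_1+u_2$. The same argument applies to $v_1$ and $v_2$.
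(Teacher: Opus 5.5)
Your proof is correct and is essentially the paper's argument. You bound $|w|\le t|x|+(1-t)|y|$, split $w=u+v$ by Riesz decomposition, rescale $u/t$ and $v/(1-t)$ into $[-|x|,|x|]$ and $[-|y|,|y|]$ (the paper instead places them in the solid set $\SolInt(C)\subseteq C$), and conclude by convexity. The only addition is that you construct the Riesz decomposition by hand from $w^{\pm}\wedge t|x|$. That construction is sound, but the bound $u_1+u_2\le t|x|$ does not follow termwise: justify it by disjointness, as $u_1+u_2=u_1\vee u_2\le t|x|$, and likewise for $v_1+v_2$.
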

\begin{proof}
    Let $C\subseteq X$ be a convex set. Let $x,y \in \SolInt(C)$ and
    let $\lambda ,\mu \in \R_+$ be such that $\lambda +\mu =1$. We
    want to show that $[-|\lambda x+\mu y|,|\lambda x+\mu y|]\subseteq
    C$. If $\lambda =0$ or $\mu =0$, the result is immediate, so we
    may assume that $\lambda ,\mu >0$. Let $z \in X$ be such that $|z| \le |\lambda x+\mu y|$. Then
    $|z|\le \lambda |x| + \mu |y|$, and by the Riesz decomposition
    theorem there exist $w_1,w_2 \in X$
    with $|w_1|\le \lambda |x|$ and $|w_2|\le \mu |y|$ such that
    $z=w_1+w_2$. Since $x,y \in \SolInt(C)$, which of course is a
    solid set, it follows that $\lambda ^{-1}w_1,\mu ^{-1}w_2 \in \SolInt(C)\subseteq C$.
    Convexity of $C$ then implies
    \[
    z= \lambda (\lambda ^{-1}w_1)+\mu (\mu ^{-1}w_2) \in C.\qedhere
    \]
\end{proof}

\begin{prop}
    <\gh[hasBasis_convex_solid]{https://github.com/davidmunozlahoz/banlat/blob/ef1bc06da46e90ed7934b35c11be07d7395bdaec/BanLat/LocallySolid/LocallyConvexSolid.lean\#L88}>
    Let $\tau $ be a locally convex-solid topology on $X$. Then $0$
    has a neighbourhood basis formed by sets that are both convex and
    solid.
\end{prop}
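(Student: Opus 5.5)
Given a neighbourhood $U$ of $0$, the plan is to produce a convex solid neighbourhood of $0$ inside $U$; the natural candidate is the solid interior of a convex neighbourhood. By local convexity of $\tau$, choose a convex neighbourhood $C$ of $0$ with $C\subseteq U$. Set $V=\SolInt(C)$. By definition $V\subseteq C\subseteq U$. As already observed, $\SolInt(C)$ is the largest solid subset of $C$, so $V$ is solid. By \cref{prop:solintconv}, $V$ is convex. What remains is to show that $V$ is a neighbourhood of $0$.

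To see that $V$ is a neighbourhood of $0$, use local solidity: there is a solid neighbourhood $W$ of $0$ with $W\subseteq C$. Since $W$ is solid and contained in $C$, maximality of the solid interior gives $W\subseteq \SolInt(C)=V$. Directly: for $x\in W$, solidity gives $[-|x|,|x|]\subseteq W\subseteq C$, so $x\in V$. Hence $V$ contains a neighbourhood of $0$ and is itself one. The sets $\SolInt(C)$, with $C$ ranging over convex neighbourhoods of $0$, therefore form a neighbourhood basis at $0$ of convex solid sets.

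The only nontrivial ingredient is the convexity of the solid interior, which is supplied by \cref{prop:solintconv}, where Riesz decomposition does the work. Once that is available, the main obstacle is just bookkeeping: ordering the two basis choices correctly, first the convex $C\subseteq U$ and then the solid $W\subseteq C$. In Lean this would amount to packaging the argument as a \texttt{Filter.HasBasis} statement, combining the two given bases.
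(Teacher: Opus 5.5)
Your proposal is correct and follows essentially the paper's argument: pick a convex neighbourhood $C\subseteq U$ and a solid neighbourhood inside $C$. Then $\SolInt(C)$ is solid, convex by \cref{prop:solintconv}, contained in $U$, and a neighbourhood of $0$ because it contains that solid neighbourhood. The paper also chooses a solid $S$ with $C\subseteq S\subseteq U$, but this is redundant because $\SolInt(C)\subseteq C\subseteq U$ already, so your version is slightly leaner.
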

\begin{proof}
    Let $U$ be a neighbourhood of $0$. By assumption, there exist
    neighbourhoods $S'\subseteq C\subseteq S\subseteq U$ of $0$ such
    that $S,S'$ are solid and $C$ is convex. Then
    $\SolInt(C)\subseteq U$ is solid, convex by
    \cref{prop:solintconv}, and it is a neighbourhood of $0$ because it
    contains $S'$.
\end{proof}

\section{Overview of the library}\label{sec:overview}

The following is an overview of what is currently available in the
library.

\begin{itemize}
  \item \gh{https://github.com/davidmunozlahoz/banlat/blob/v0.1.0/BanLat.lean}: General import file for the whole library.

  \item \gh{https://github.com/davidmunozlahoz/banlat/blob/v0.1.0/BanLat/Basic.lean}: Develops the basic definitions and
      identities of lattice-ordered groups and real vector lattices.
      Defines the Archimedean property and proves equivalent
      characterizations.

  \item \gh{https://github.com/davidmunozlahoz/banlat/blob/v0.1.0/BanLat/Normed.lean}: Defines normed vector lattices
      and Banach lattices. It proves continuity of lattice operations,
      Lipschitz continuity of the modulus map, closedness of the
      positive cone and order intervals, boundedness of order-bounded
      sets, and monotone convergence facts.

  \item \gh{https://github.com/davidmunozlahoz/banlat/blob/v0.1.0/BanLat/Disjoint.lean}: Introduces the notion of
      disjoint vectors in a vector lattice, and proves basic facts
      related to disjointness.

  \item \gh{https://github.com/davidmunozlahoz/banlat/blob/v0.1.0/BanLat/RieszDec.lean}: Proves the Riesz decomposition
      theorem and closely related results.

  \item \gh{https://github.com/davidmunozlahoz/banlat/blob/v0.1.0/BanLat/OrderComplete.lean}: Defines order and $\sigma
      $-order completeness, and proves basic characterizations (e.g.,
      it suffices to check completeness for increasing positive nets).

  \item \gh{https://github.com/davidmunozlahoz/banlat/blob/v0.1.0/BanLat/OrderDense.lean}: Defines order dense subsets,
      and proves some basic properties.

  \item \gh{https://github.com/davidmunozlahoz/banlat/blob/v0.1.0/BanLat/OrderUnit.lean}: Defines weak and strong order units.

  \item \gh{https://github.com/davidmunozlahoz/banlat/blob/v0.1.0/BanLat/Pi.lean}: Introduces a natural vector lattice
      structure in the arbitrary product of vector lattices. It also
      introduces $p$-sums, $1\le p\le \infty $, of normed and Banach
      lattices. Records that the $\infty $-sum of AM-spaces is an
      AM-space, and the $1$-sum of AL-spaces is an AL-space.

  \item \gh{https://github.com/davidmunozlahoz/banlat/blob/v0.1.0/BanLat/Atom.lean}: Defines atoms in vector lattices.
      Defines the continuous and atomic part of a vector lattice.
      Shows that every infinite-dimensional Archimedean vector
      lattice contains a sequence of pairwise disjoint non-zero
      vectors. Shows that an Archimedean vector lattice is finite-dimensional
      if and only if it is isomorphic to $\R^{n}$.

  \item \gh{https://github.com/davidmunozlahoz/banlat/blob/v0.1.0/BanLat/QuasiInteriorPoint.lean}: Defines
      quasi-interior points. Proves that every separable Banach
      lattice has a quasi-interior point.

  \item \gh{https://github.com/davidmunozlahoz/banlat/blob/v0.1.0/BanLat/Quotient.lean}: Constructs quotients of vector
      lattices by order ideals. Defines the quotient lattice
      structure, proves the canonical quotient map is a vector lattice
      homomorphism, develops inequality-lifting lemmas, factors
      positive and lattice homomorphisms through quotients,
      characterises Archimedean quotients by uniform closedness of the
      ideal, and builds normed and Banach lattice quotients by closed
      ideals.

  \item \gh{https://github.com/davidmunozlahoz/banlat/blob/v0.1.0/BanLat/Dual.lean}: Develops the order dual of vector
      lattices and the norm dual of Banach lattices.

  \item \gh{https://github.com/davidmunozlahoz/banlat/blob/v0.1.0/BanLat/Bidual.lean}: Introduces the bidual of a Banach
      lattice and proves that the canonical embedding is a lattice
      homomorphism.

  \item \gh{https://github.com/davidmunozlahoz/banlat/blob/v0.1.0/BanLat/LLexpr.lean}: Defines formal lattice-linear
      expressions in finitely many variables and their evaluation in
      arbitrary vector lattices. It proves that lattice-linear
      identities valid over \R\ hold in every vector lattice.

  \item \gh{https://github.com/davidmunozlahoz/banlat/tree/v0.1.0/BanLat/Substructures}: Collects definitions and basic
      facts about the classical substructures of Banach lattices:
      sublattices, ideals, bands, and projection bands.

      \begin{itemize}
          \item \gh{https://github.com/davidmunozlahoz/banlat/blob/v0.1.0/BanLat/Substructures/Sublattice.lean}: Defines
              vector sublattices. It proves that closure under
              one lattice operation or under absolute value suffices,
              constructs sublattices generated by a set, describes
              them through sup/inf closures and lattice-linear
              combinations, and handles topological closures.

          \item \gh{https://github.com/davidmunozlahoz/banlat/blob/v0.1.0/BanLat/Substructures/Ideal.lean}: Defines
              order ideals in vector lattices. It develops
              generated and principal ideals, the induced norm on
              principal ideals, sums of ideals, closed order ideals,
              and the connection between strong order units and
              principal ideals.

          \item \gh{https://github.com/davidmunozlahoz/banlat/blob/v0.1.0/BanLat/Substructures/Band/Basic.lean}: Defines
              bands and proves basic properties.

          \item \gh{https://github.com/davidmunozlahoz/banlat/blob/v0.1.0/BanLat/Substructures/Band/Lattice.lean}:
              Builds the complete lattice structure on bands ordered
              by inclusion. Intersections give infima, with the whole
              space as top and the zero band as bottom.

          \item \gh{https://github.com/davidmunozlahoz/banlat/blob/v0.1.0/BanLat/Substructures/Band/DisjointComplement.lean}:
              Defines the disjoint complement of a set and proves its
              basic laws. It shows disjoint complements are bands, and
              are closed in normed vector lattices.

          \item \gh{https://github.com/davidmunozlahoz/banlat/blob/v0.1.0/BanLat/Substructures/Band/Generated.lean}:
              Defines the band generated by a set. In Archimedean
              lattices, the band generated by a set is its double
              disjoint complement. Relates weak order units to
              principal bands. Proves that bands are norm closed in
              normed lattices.

          \item \gh{https://github.com/davidmunozlahoz/banlat/blob/v0.1.0/BanLat/Substructures/Band/Projection.lean}:
              Defines projection bands, band projections, and proves
              the basic properties and characterizations.

          \item \gh{https://github.com/davidmunozlahoz/banlat/blob/v0.1.0/BanLat/Substructures/Band/PPP.lean}: Defines
              the projection property and principal projection
              property. It proves that order complete lattices have
              the projection property, $\sigma $-order complete
              lattices have the principal projection property,
              projection property implies principal projection
              property, and these properties imply the Archimedean
              property.

          \item \gh{https://github.com/davidmunozlahoz/banlat/blob/v0.1.0/BanLat/Substructures/Band/Decomposition.lean}:
      Develops infinite decompositions using principal band
        projections under PPP.
\end{itemize}

  \item \gh{https://github.com/davidmunozlahoz/banlat/tree/v0.1.0/BanLat/Operators}: Collects operator theory in vector
      and Banach lattices.
\begin{itemize}
  \item \gh{https://github.com/davidmunozlahoz/banlat/blob/v0.1.0/BanLat/Operators/Hom.lean}: Defines vector lattice
      homomorphism, both as a bundled structure and as a predicate.
      Proves their basic properties. Defines a vector lattice
      equivalence as a bijective vector lattice homomorphism. Defines
      a Banach lattice equivalence as a vector lattice homomorphism
      that is a surjective isometry.

  \item \gh{https://github.com/davidmunozlahoz/banlat/blob/v0.1.0/BanLat/Operators/Positive.lean}: Defines positive
      linear operators. Proves the extension lemma from additive maps
      defined on the positive cone. Proves automatic continuity of
      positive operators from Banach lattices to normed vector
      lattices.

  \item \gh{https://github.com/davidmunozlahoz/banlat/blob/v0.1.0/BanLat/Operators/OrderBounded.lean}: Defines order
      bounded linear maps. It proves positive operators are order
      bounded, order boundedness is stable under algebraic operations,
      order bounded maps from Banach lattices are continuous, and the
      space of order bounded operators is an ordered vector space.

  \item \gh{https://github.com/davidmunozlahoz/banlat/blob/v0.1.0/BanLat/Operators/Regular.lean}: Defines regular
      operators. It proves every regular operator is order bounded.

  \item \gh{https://github.com/davidmunozlahoz/banlat/blob/v0.1.0/BanLat/Operators/RieszKantorovich.lean}: Proves the
      Riesz--Kantorovich formulas for order bounded operators into
      order complete vector lattices.
\end{itemize}

\item \gh{https://github.com/davidmunozlahoz/banlat/tree/v0.1.0/BanLat/Convergences}: The various notions of convergence
    in a vector lattice.
\begin{itemize}
  \item \gh{https://github.com/davidmunozlahoz/banlat/blob/v0.1.0/BanLat/Convergences/Order.lean}: Defines order
      convergence of nets and its basic properties.

  \item \gh{https://github.com/davidmunozlahoz/banlat/blob/v0.1.0/BanLat/Convergences/Uniform.lean}: Defines uniform
      convergence and uniformly complete vector lattices.

  \item \gh{https://github.com/davidmunozlahoz/banlat/blob/v0.1.0/BanLat/OrderContinuous/Basic.lean}: Defines $\sigma
      $-order continuous and order continuous norms. Proves that every order
      continuous Banach lattice is order complete.

  \item \gh{https://github.com/davidmunozlahoz/banlat/blob/v0.1.0/BanLat/OrderContinuous/Ando.lean}: Ando's theorem: a
      Banach lattice has order continuous norm if and only if every
      norm-closed order ideal is a band, and in that situation every
      norm-closed ideal is a projection band.

  \item \gh{https://github.com/davidmunozlahoz/banlat/blob/v0.1.0/BanLat/OrderContinuous/Decomposition.lean}: Proves a
      decomposition theorem for order continuous Banach lattices.

  \item \gh{https://github.com/davidmunozlahoz/banlat/blob/v0.1.0/BanLat/OrderContinuous/MeyerNieberg.lean}: Formalises
      Meyer--Nieberg's characterization of order continuous Banach
      lattices.

  \item \gh{https://github.com/davidmunozlahoz/banlat/blob/v0.1.0/BanLat/OrderContinuous/Nakano.lean}: Formalises
      Nakano's theorem: a Banach lattice is order continuous if and
      only if it is $\sigma $-order complete and $\sigma $-order
      continuous if and only if every increasing order bounded
      sequence converges.
\end{itemize}

\item \gh{https://github.com/davidmunozlahoz/banlat/tree/v0.1.0/BanLat/AMSpace}: The theory of AM-spaces.
\begin{itemize}
  \item \gh{https://github.com/davidmunozlahoz/banlat/blob/v0.1.0/BanLat/AMSpace/Basic.lean}: Defines AM-spaces and
      AM-spaces with unit. Principal ideals in Archimedean vector
      lattices, equipped with the induced norm, are AM-spaces with
      unit.

  \item \gh{https://github.com/davidmunozlahoz/banlat/blob/v0.1.0/BanLat/AMSpace/Max.lean}: Proves the AM-norm formula
      for general nonnegative elements (not necessarily disjoint).

  \item \gh{https://github.com/davidmunozlahoz/banlat/blob/v0.1.0/BanLat/AMSpace/Maximal.lean}: Proves that every proper
      order ideal lies inside a maximal proper order ideal.

  \item \gh{https://github.com/davidmunozlahoz/banlat/blob/v0.1.0/BanLat/AMSpace/Characters.lean}: Defines characters on
      an AM-space with unit as functionals that are lattice
      homomorphisms and send the unit to $1$. Proves a key fact about
      characters: positive elements attain their norm at a character.

  \item \gh{https://github.com/davidmunozlahoz/banlat/blob/v0.1.0/BanLat/AMSpace/Kakutani.lean}: Proves Kakutani's
      representation theorem for AM-spaces.

  \item \gh{https://github.com/davidmunozlahoz/banlat/blob/v0.1.0/BanLat/AMSpace/Dual.lean}: Shows that the dual of an AM-space is an AL-space.
\end{itemize}

\item \gh{https://github.com/davidmunozlahoz/banlat/tree/v0.1.0/BanLat/ALSpace}: The theory of AL-spaces.
\begin{itemize}
  \item \gh{https://github.com/davidmunozlahoz/banlat/blob/v0.1.0/BanLat/ALSpace/Basic.lean}: Defines an AL-space as a Banach lattice whose norm is additive on disjoint positive sums.

  \item \gh{https://github.com/davidmunozlahoz/banlat/blob/v0.1.0/BanLat/ALSpace/Add.lean}: Strengthens the definition
      to additivity of the norm on the positive cone.

  \item \gh{https://github.com/davidmunozlahoz/banlat/blob/v0.1.0/BanLat/ALSpace/Dual.lean}: Constructs a functional on
      the dual that agrees with the norm on positive elements.

  \item \gh{https://github.com/davidmunozlahoz/banlat/blob/v0.1.0/BanLat/ALSpace/DualALAM.lean}: Shows that the dual of an
      AL-space is an AM-space with unit. Shows that if the dual is an
      AL-space, the original space is an AM-space, and conversely.

  \item \gh{https://github.com/davidmunozlahoz/banlat/blob/v0.1.0/BanLat/ALSpace/OrderContinuous.lean}: Shows every AL-space has order continuous norm.

  \item \gh{https://github.com/davidmunozlahoz/banlat/blob/v0.1.0/BanLat/ALSpace/Sublattice.lean}: Proves that closed vector sublattices of AL-spaces are again AL-spaces with their inherited order and norm structure.

  \item \gh{https://github.com/davidmunozlahoz/banlat/blob/v0.1.0/BanLat/ALSpace/Kakutani.lean}: Proves Kakutani's representation theorem for AL-spaces.
\end{itemize}

\item \gh{https://github.com/davidmunozlahoz/banlat/tree/v0.1.0/BanLat/Examples}: Standard examples of Banach lattices.
\begin{itemize}
  \item \gh{https://github.com/davidmunozlahoz/banlat/blob/v0.1.0/BanLat/Examples/CofK/Basic.lean}: Shows that $C(K)$ is
      a Banach lattice, with the usual norm and pointwise operations.

  \item \gh{https://github.com/davidmunozlahoz/banlat/blob/v0.1.0/BanLat/Examples/CofK/Dual.lean}: Proves that the dual of $C(K)$,
      as a Banach lattice, is $M(K)$.

  \item \gh{https://github.com/davidmunozlahoz/banlat/blob/v0.1.0/BanLat/Examples/Lp/Basic.lean}: Shows that $L_p(\mu )$
      is a Banach lattice, for all $1\le p$.

  \item \gh{https://github.com/davidmunozlahoz/banlat/blob/v0.1.0/BanLat/Examples/Lp/Sublattice.lean}: Characterizes
      sublattices of $L_p(\mu )$, for $\mu $ a finite measure and
      $1\le p<\infty $, containing the constant $1$ function.

  \item \gh{https://github.com/davidmunozlahoz/banlat/blob/v0.1.0/BanLat/Examples/SignedMeasure/Basic.lean}: Proves that
      finite signed measures on a measurable space form a Banach
      lattice.

  \item \gh{https://github.com/davidmunozlahoz/banlat/blob/v0.1.0/BanLat/Examples/MofK/Basic.lean}: Defines $M(K)$, the
      space of regular signed Borel measures on a compact Hausdorff
      space $K$, as a closed vector sublattice of the Banach lattice
      of finite signed measures on $K$.

  \item \gh{https://github.com/davidmunozlahoz/banlat/blob/v0.1.0/BanLat/Examples/MofK/ALspace.lean}: Shows $M(K)$ is an AL-space.

  \item \gh{https://github.com/davidmunozlahoz/banlat/blob/v0.1.0/BanLat/Examples/MofK/Atom.lean}: Characterises atoms
      in $M(K)$ as positive scalar multiples of Dirac measures.

  \item \gh{https://github.com/davidmunozlahoz/banlat/blob/v0.1.0/BanLat/Examples/MofK/Band.lean}: Describes principal
      bands in $M(K)$.

  \item \gh{https://github.com/davidmunozlahoz/banlat/blob/v0.1.0/BanLat/Examples/MofK/Decomposition.lean}: Develops the
      decomposition of an element of $M(K)$ into its atomic and
      discrete parts.

  \item \gh{https://github.com/davidmunozlahoz/banlat/blob/v0.1.0/BanLat/Examples/MofK/L1space.lean}: Proves that $M(K)$
      is lattice isometric to $L_1(\mu )$, for some measure $\mu $.
\end{itemize}

\item \gh{https://github.com/davidmunozlahoz/banlat/tree/v0.1.0/BanLat/Free}
\begin{itemize}
  \item \gh{https://github.com/davidmunozlahoz/banlat/blob/v0.1.0/BanLat/Free/FVL.lean}: Constructs the free vector
      lattice on a type of generators $\alpha $ as a concrete vector sublattice
      of real-valued functions on the space $\alpha \to \R$.

  \item \gh{https://github.com/davidmunozlahoz/banlat/blob/v0.1.0/BanLat/Free/FVLv.lean}: Constructs the free vector lattice generated by a Banach space as a sublattice of continuous functions on the dual unit ball.

  \item \gh{https://github.com/davidmunozlahoz/banlat/blob/v0.1.0/BanLat/Free/FBL.lean}: Constructs the free Banach lattice over a Banach space.
\end{itemize}

\item \gh{https://github.com/davidmunozlahoz/banlat/tree/v0.1.0/BanLat/LocallySolid}: The theory of locally solid topologies.
\begin{itemize}
  \item \gh{https://github.com/davidmunozlahoz/banlat/blob/v0.1.0/BanLat/LocallySolid/Basic.lean}: Defines locally solid
      topological vector lattices, and proves basic properties.

  \item \gh{https://github.com/davidmunozlahoz/banlat/blob/v0.1.0/BanLat/LocallySolid/Completion.lean}: Builds the
      completion of a Hausdorff locally solid vector lattice, endowing
      it with the structure of a Hausdorff locally solid vector
      lattice.

  \item \gh{https://github.com/davidmunozlahoz/banlat/blob/v0.1.0/BanLat/LocallySolid/Lebesgue.lean}: Defines Lebesgue,
      $\sigma $-Lebesgue, and pre-Lebesgue properties for locally solid vector lattices.

  \item \gh{https://github.com/davidmunozlahoz/banlat/blob/v0.1.0/BanLat/LocallySolid/LocallyConvexSolid.lean}: Defines
      locally convex-solid vector lattices as locally solid vector
      lattices with locally convex topology. It proves the
      neighbourhoods of zero admit a basis of sets that are at the
      same time solid and convex.

  \item \gh{https://github.com/davidmunozlahoz/banlat/blob/v0.1.0/BanLat/LocallySolid/WithSeminorms.lean}: Introduces
      lattice seminorms. Characterises locally convex-solid linear topologies as those induced by families of lattice seminorms.
\end{itemize}

\item \gh{https://github.com/davidmunozlahoz/banlat/tree/v0.1.0/BanLat/Preliminaries}: Preliminary facts required for
    the library. For now, they are only facts about measure theory.
\begin{itemize}
  \item \gh{https://github.com/davidmunozlahoz/banlat/blob/v0.1.0/BanLat/Preliminaries/SignedMeasure.lean}: Collects
      facts about signed measures. It proves that a sequence of
      measures that is Cauchy for the total variation norm converges.

  \item \gh{https://github.com/davidmunozlahoz/banlat/blob/v0.1.0/BanLat/Preliminaries/Regularity.lean}: Defines
      regularity of signed measures on compact Hausdorff spaces by
      regularity of their total variation.

  \item \gh{https://github.com/davidmunozlahoz/banlat/blob/v0.1.0/BanLat/Preliminaries/HasNoAtoms.lean}: Proves that, if
      $\mu $ is a non-atomic measure and $A$ is a $\mu $-measurable
      set, then for every $0\le \lambda \le \mu (A)$ there exists
      $B\subseteq A$ measurable such that $\mu (B)=\lambda $.
\end{itemize}

\item \gh{https://github.com/davidmunozlahoz/banlat/tree/v0.1.0/BanLat/Tactic}: New tactics implemented in the library.
\begin{itemize}
    \item \gh{https://github.com/davidmunozlahoz/banlat/blob/v0.1.0/BanLat/Tactic/LLexpr.lean}: Implements the
        \gh[llarith]{https://github.com/davidmunozlahoz/banlat/blob/ef1bc06da46e90ed7934b35c11be07d7395bdaec/BanLat/Tactic/LLexpr.lean\#L474} tactic for lattice-linear identities and non-strict inequalities.
\end{itemize}
\end{itemize}

\section{Extending the library}\label{sec:extension}

The following are some natural directions in which the library could
be extended.

\begin{description}
    \item[Locally solid topologies] Several results about convergence
        in Banach lattices (such as the Meyer-Nieberg and Andô
        theorems in order continuous Banach lattices) can be
        extended to the more general context of locally solid topologies.
    \item[Order continuous operators] Develop the theory of ($\sigma
        $-)order
        continuous operators. In particular, prove Ogasawara's
        theorem: the families of order continuous and $\sigma $-order
        continuous operators are band projections on the space of
        order bounded operators, whenever the codomain is order
        complete.
    \item[Orthomorphisms, \falg s, and universal completions] Develop
        the theory of orthomorphisms. This will also require
        introducing \falg s, and constructing universal completions.
    \item[Nakano theory] Define carriers, null ideals, and prove
        Nakano's theorem relating the carrier and null ideals of
        disjoint order continuous functionals.
    \item[Free Banach lattices] Develop further the theory of free
        Banach and vector lattices, proving properties about
        these spaces.
    \item[Atomic and order continuous Banach lattices] Develop the
        theory of Banach lattices that are at the same time atomic and
        order continuous.
    \item[Enrich the examples] Prove further properties about $C(K)$
        and $L_p(\mu )$: characterize their ideals, bands, and
        projection bands; characterize when $C(K)$ is ($\sigma $)-order
        complete, etc. In general, lay down the bridges between Banach
        lattice theory and topology/measure theory.
    \item[Positively homogeneous function calculus] Construct
        the positively homogeneous function calculus on uniformly complete
        vector lattices. Define $p$-sums.
    \item[Convex/concave Banach lattices and operators] Develop the
        theory of convexity of Banach lattices and operators.
\end{description}

\section*{Acknowledgements}

We would like to thank M.\ A.\ Taylor and P.\ Tradacete for their
feedback and continued support. We are also indebted to J.\ de Dios
for his feedback, and for suggesting the idea of creating the
\gh[llarith]{https://github.com/davidmunozlahoz/banlat/blob/ef1bc06da46e90ed7934b35c11be07d7395bdaec/BanLat/Tactic/LLexpr.lean\#L474}
tactic. Finally, we thank E.\ Bilokopytov and M.\ de Jeu for feedback on
\cref{sec:insights}.

Research supported by an FPI–UAM 2023 contract (funded by Universidad
Autónoma de Madrid), by grants PID2024-162214NB-I00 and
CEX2023-001347-S (funded by MCIN/AEI/10.13039/501100011033), and by
the CSIC cooperation grant COOPB25033 under the i-COOP program.

\section*{AI disclosure}

GPT-5.5 and Claude Opus 4.6 were used to write the code in \banlat. In the process of writing the paper,
GPT-5.5 was only used to search for typos and other obvious errors in
final versions of the text, and to generate the list of files with
their docstrings (which were later rewritten) in \cref{sec:overview}.

\emergencystretch=1em
\printbibliography

\end{document}